\newtheorem{thm}{Theorem}
\newtheorem{lem}{Lemma}
\newcommand{\blank}[1]{\hspace*{#1}}
\begin{document}

\setcounter{section}{0} 
\pagenumbering{arabic}
\setcounter{page}{1}
\newtheorem{lemma}{Lemma}
\newtheorem{theorem}{Theorem}
\newtheorem{remark}{Remark}[section]
\newtheorem{corollary}{Corollary}[section]
\newtheorem{proposition}{Proposition}
\newcommand{\vect}[1]{\overline{#1}}
\def\e{\eta}
\def\m{\mu}
\def\a{\alpha}
\def\r{\rho} \def\s{\sigma}
\def\l{\lambda}
\def\P{\mbox{P}}
\def\E{\mbox{\rm E}}
\def\Cov{\mbox{\rm Cov}}
\def\o{\omega} \date{}
\title{When to arrive at a queue with earliness, tardiness and waiting costs}
\author{Eliran Sherzer and Yoav Kerner}

\newcommand{\overbar}[1]{\mkern 1.5mu\overline{\mkern-1.5mu#1\mkern-1.5mu}\mkern 1.5mu}

\maketitle

\begin{abstract}
\noindent We consider a queueing facility where customers decide when to arrive. All customers have the same desired arrival time (w.l.o.g.\ time zero). There is one server, and the service times are independent and exponentially distributed. The total number of customers that demand service is random, and follows the Poisson distribution. Each customer wishes to minimize the sum of three costs: earliness, tardiness and waiting. We assume that all three costs are linear with time and are defined as follows. Earliness is the time between arrival and time zero, if there is any. Tardiness is simply the time of entering service, if it is after time zero. Waiting time is the time from arrival until entering service. We focus on customers' rational behaviour, assuming that each customer wants to minimize his total cost, and in particular, we seek a symmetric Nash equilibrium strategy. We show that such a strategy is mixed, unless trivialities occur. We construct a set of equations that its solution provides the symmetric Nash equilibrium. The solution is a continuous distribution on the real line.  We also compare the socially optimal solution (that is, the one that minimizes total cost across all customers) to the overall cost resulting from the Nash equilibrium.
\end{abstract}

\section{Introduction}

In many real life situations, customers choose strategically when to
arrive to a queue. For example, when commuting to work in the morning, or going to
 the cafeteria at lunch time, etc. In those kind of
situations, customers usually have a preferred arrival time,
such as, commuting to work right after dropping the kids at school. For the sake of simplicity, we
refer to the preferred arrival time as time 0 (w.l.o.g).
In many cases, customers share the same preferred arrival time.
This is very likely to be the case in the morning commute, or when going out for lunch.
If all customers wish to arrive at that time, it causes congestions.
As a consequence, one would consider alter his arrival time in order
to avoid waiting for a long time. In this situation, from an individual
point of view, a trade-off between the arrival time preference and
the queue length is formed. This trade-off was studied widely in the
literature
 (see~\cite{Breinbjerg},~\cite{Haviv13},~\cite{junejashimkin11},~\cite{junejajain09}, and~\cite{Ravner14}). Whereas, in all of them the trade-off was between waiting and tardiness
 costs. However, we suggest an additional cost, that is also reflected in the dilemma of when to arrive, and it is referred as \emph{earliness cost}.
This cost is induced by making the effort of arriving earlier than
desired. A classic example would be
 commuting to work at 4 AM just to avoid traffic jams. In this example, clearly, arriving this early minimizes the waiting time in one hand,
  but on the other, commuting so early is inconvenient. We further discuss
 the motivation of adding earliness cost in the next section.  \\
\indent In this study, we propose a model where customers choose
when to arrive. Each individual possess three types of costs:
Earliness, tardiness and waiting. Whereas, waiting refers to the amount of
time one waited in the queue. Earliness is related to the amount of time one
arrived before time 0, and if he arrived after time 0, then there is
no earliness cost. Tardiness is related to the amount of time entering service after time 0. However, if an individual service time began
prior to time 0, then there is no tardiness cost. We focus on
homogenous customers with respect to their costs and desired service
time. Under this assumption, customers make their arrival decision
while knowing that others have the same preferences. In such
systems, all customers' decisions interact. That is, an individual's
cost is determined not only by his own decision, but also by
others', and thus the natural model of customers' arrivals is a
non-cooperative game. Motivated by this point of view, we look for a
symmetric Nash equilibrium. That is, if a (possibly) mixed arrival
strategy is used by all, an individual has no incentive to
unilaterally deviate from it. This solution, which determines the
arrival process for the system, implies the opening and closing
times of the system. We also discuss a system with exogenous opening
and closing times, and study the impact of these constraints on
customers' behavior.\\
\indent We found that in the case where there were no opening and
closing time constraints, the Nash equilibrium is mixed and unique,
and has a positive density $f(t)$ along the interval
$[-T_{e_1},T_{e_2}]$, with some $T_{e_1}, T_{e_2}>0$. $f(t)$ is
complete (that is, the CDF has no jumps) and continuous almost
everywhere, with a connected support. In the case where the opening
and closing times are constrained, the equilibrium may be either
pure or mixed, and if the latter is the case, then it can also
contain an atom.\\
\indent In this paper we show how to compute the Nash equilibrium
strategy for all of the above-mentioned cases. The equilibrium
strategy, in general, is not \added{socially} optimal, in the sense
that it doesn't minimize the expected overall costs of all the
customers. This is because the decision of when to arrive, taken by
an individual, imposes additional losses on the others. The latter
are often referred to as externalities. Therefore, finding the
socially optimal solution is also of our interest. This allows us
to compare the strategy that holds for the Nash equilibrium and the
one that minimizes the overall cost. In order to compare
\deleted{between}the two, we derive their ratio, which is called the
\emph{price of anarchy} (PoA). The value of the PoA can help us
understand how much customers can \replaced{reduce}{minimize} their
costs by cooperating with each other.\\
\indent The analysis in the paper is performed for two types of
environment\added{s}. The first is the familiar stochastic
environment.
 In this case we assume that the number of users is random and follows
the Poisson distribution\footnote{One can think of a huge amount of potential users, each one participate with a significantly small probability.
 The Poisson approximation of the binomial  distribution leads to our assumption\ (see more in~\cite{Haviv13})}
 and the service time\added{s} are  \replaced{are}{follows the} exponential\added{ly} distribut\replaced{ed}{ion}, independent and assumed to be work conserving.
  The second environment is fluid-based, and each user is associated with a drop of infinitesimal size. The stochastic model is more accurate
   but can provide only numerical results (and not analytical).\ The fluid model on the other hand can provide analytic results. Moreover, $PoA$ can be obtained as well.
   We make some observations regarding the relations between the two models later on.\\
\indent The paper is organized as follows \replaced{:}{.} In
Section~\ref{sec:motivationliterture} we describe the model
motivation and provide a literature review. In
Section~\ref{sec:model} we present the model under study. In
Section~\ref{sec:poisonnocons}, we obtain the arrival equilibrium
strategy for both the stochastic model and the fluid model. In
addition, we compare via the fluid model the social cost of the
equilibrium arrival strategy and the socially optimal cost. In
Section~\ref{sec:poisoncons}, we obtain the above solution concepts
for the constrained model. Finally, in Section~\ref{con:conclusions}
we summarize our main results.

\section{Motivation and literature review}\label{sec:motivationliterture}

We next provide a motivation for the model under study. More specifically, we
explain why adding earliness cost to our model is important. This is
followed by a literature review.

\subsection{Motivation}\label{subsec:motivaition}

In the previous section, we introduced the concept of earliness
cost. Whereas, earliness cost induced by making the effort of
arriving earlier than desired. We next specify why we think it is
vital to include it in the model. For this purpose, we consider the
morning commute example. When commuting, it is very natural having a
preferred arrival time. For the sake of demonstration, we consider
an employee, that doesn't want to wake up too early for work in one hand, but on the other hand, doesn't want to
arrive too late, when there are important meetings to attend. For this
particular example, we set the desired arrival time to be 08:00 AM.
The problem is, arriving at 08:00 comes with heavy traffic. As a
consequence, each arrival point, is costly, due to the different cost types. In fact,
each point comes with an aggregation of the three cost types. For illustration, we consider a few arrival epochs.
Arriving between 05:30 to 06:30 AM induces
an expected high earliness cost, low waiting cost and no tardiness cost at all, since heavy traffic is not expected and there is no reason to be late in this case.
 Arriving between 07:30 to 08:30 AM induces a high waiting cost due to heavy traffic, and relatively low earliness and tardiness costs.
  Whereby, arriving between 10:30 to 11:30 AM, induces an expected high tardiness cost, no earliness cost since no effort to arrive earlier was made, and probably low expected waiting cost as the traffic is probably flowing normally at that point. \\
\indent The main idea is that when one needs to decide when to
arrive, there are three type distinctive types of costs to consider.
Moreover, they all should be under consideration simultaneously, as
they are all time-dependant. To the best of our knowledge, we are the first to
identify this additional cost (i.e. earliness) under a game theoretic environment\footnote{Vickrey, already in 1969, considered the earliness
concept in~\cite{Vickery69}, but not as a game.}. We further
suggest, that even in the proposed cafeteria problem
in~\cite{junejashimkin11},~\cite{junejajainshimkin11}
and~\cite{junejajain09}, one can also consider earliness cost. It is
assumed in those studies that customers want to arrive to eat lunch
when the cafeteria opens. But, it's very likely that customers don't
arrive early to the cafeteria since it is not convenient for them (e.g. not hungry yet)
and not because it is not open, as in many cases, they are open all day long. This inconvenience induces earliness cost
 and it should be distinctive to other two cost types, waiting or tardiness. Further examples which describe the proposed arrival dilemma are,
going to the supermarket, the gym and to the playground with the kids. Those places are open most of the time, but less crowded in the beginning or the end of the day, when it is less convenient to arrive.  \\
\indent The earliness cost can be interpreted in two ways. The first
one is simply arriving too early, for example, by waking up at the
break of dawn just to avoid traffic jams. The second is entering or
completing service earlier than desired, for example, arriving
really early to a concert in order to get good seats. However, in
this study, we focus on the first option, where it does seem more
natural in many scenarios of strategic arrivals. We take for example
the morning commute dilemma. If one made the effort and arrived
earlier than he originally planned, he really has no incentive to
stay longer on the roads just to arrive to work at his desired time.
That is, he is clearly better off arriving earlier to work rather
than being stuck in traffic. However, the second option suggest that
he might prefer waiting over arriving earlier to work, where
obviously it is not the case\footnote{We refer the reader
to~\cite{ravnerhassin17} for a model with the second option.}.
\subsection{Literature review}\label{subsec:literturereview} Similar
queueing problems, in which customers need to decide when to arrive,
have been introduced in recent studies. The first such study was by
Glazer and Hassin \cite{Glazer&hassin83}, and they considered a
model where service has an opening time point, which is set
\replaced{to be}{at} time 0 (w.l.o.g). However, customers are
allowed to arrive before time 0, thereby being referred to as early
birds. In order to be served, customers must arrive prior to a
predetermined closing time, which is denoted by $T$ and assumed to
be positive. Similar to our model the total number of arrivals comes
from a Poisson distribution.
 All customers wish to minimize their waiting time in
the queue. In that study it was shown that there is a unique, mixed
Nash equilibrium in the interval $(-w,T)$ for some $w>0$. It is also
shown that the arrival distribution in the interval $[-w,0]$ is
uniform, and with a positive density distribution $f(t)$ along
$(0,T]$. The authors showed that the expected waiting time for
arriving any time within the arrival support is $w$. This is because
the expected waiting time is constant along the support and the
queueing time of a customer who arrives at time $-w$ is exactly $w$,
since he is guaranteed to encounter an empty queue. A subsequent
study by Hassin and Kleiner \cite{Kliner&hassin11} presented the same
model, only with an additional restriction: arriving at $t<0$ is
forbidden. They showed that unless the system is heavily loaded, the
restriction does not change the average waiting time in a
significant way. They also compared the equilibrium solution to the
socially optimal solution using fluid models. They found that the
ratio between them increases when the system becomes more heavily
loaded. Another important aspect of this study was gaining a lower
equilibrium social cost, while restricting the arrival times to two
or three discrete points.\\
\indent Until this point, only the waiting cost
was considered. Juneja and Jain \cite{junejajain09} introduced the
concept of tardiness cost, which is added to the waiting cost in
their model. Their model was inspired by a situation where customers
decide when to arrive to a concert or a cafeteria. Each individual's goal is to arrive at the time $t$
that minimizes the expression $\mathbb{E}[\alpha
W(t)+\beta(t+W(t))]$, where, $\alpha$ and
$\beta$ were set to be the linear waiting and tardiness costs per
time unit respectively, $t$ is the arrival time and $W$ is the
waiting time.  Using fluid models, the resulting equilibrium
was shown to be a uniform arrival distribution in the interval
$[\tau_0,\tau_1]$ for some $\tau_0<0$ and $\tau_1>0$. They also
showed that the PoA is 2. This model differs from ours in two things. The first is that it doesn't consider tardiness cost and the second that they consider a constant number of arrival, as opposed to our model where it comes from a Poisson distribution. A subsequent and more general study was
published by Haviv \cite{Haviv13}. As in~\cite{junejajain09}, this
study recognizes tardiness cost as well as waiting cost. However,
the generalization of Haviv's study is due to including both the
case where early birds are allowed and the case where they are not.
In order to get service, customers are obliged to arrive prior to a
predetermined time $T>0$ ($T=\infty$ is possible). If early birds
are allowed, the obtained mixed equilibrium is a positive almost
continuous arrival probability density function along the interval
$(-w,T_e)$ for some $w>0$ and $T_e>0$, if $T_e<T$. The arrival
distribution is uniform for the interval $(-w,0)$. If early birds
are not allowed, the result is an atom of a size denoted by $p$ at
zero, an interval $(0,t')$ with a zero density, and some positive
density $f(t)$ along $[t',T_e]$ for some $T_e<T$, or a pure
equilibrium at time 0. However, when $T_e>T$ this means that the
pre-imposed closing time will affect the system. Of course this will
affect the solution, and now the upper limit of the arrival
distribution support $T_e$ is replaced by $T$. This model is almost identical to ours except we also consider earliness cost.  \\
\indent As mentioned, Juneja
and Jain based their study on the concert/cafeteria problem. Other
studies concerning this problem include \cite{junejashimkin11} and
\cite{junejajainshimkin11}, where the number of arrivals is finite
and non-random, and one by Honnappa and Jain~\cite{jainhonnappa12}
which considered a multiple-server model in which customers'
decision is to choose a server. Another similar model was used
in~\cite{Ravner14}, which considered a model including waiting time
along with penalties by index of arrival, and characterized the
arrival process, which constitutes a symmetric Nash equilibrium. \\
\indent Another study which include both waiting and tardiness cost was done
by Breinbjerg~\cite{Breinbjerg}. In this study as opposed to the
previous ones, nonlinear cost functions were considered and
the service distribution is general. By using a novel approach he
showed that there exists a unique symmetric Nash equilibrium along
numerical examples. Beside the theoretical studies, there are
several papers which includes empirical research. This allows to
examine the accuracy of analytical equilibrium solution. Strategic
queuing models with discrete arrival instances and deterministic
service durations, the theoretical predictions were compared to
empirical finding in controlled laboratory experiments, which
provide support for the symmetric mixed-strategy equilibrium
solution on the aggregate level (although not on the individual
level), see (\cite{Breinbjergsebald},~\cite{Seale05},~\cite{Stein07}
and~\cite{Rapoport04}).\\
\indent More related work was done by McAfee and
McMillan \cite{mcafeemcmillan87}, and Haviv and Milchtaich
\cite{Havivmilchtaich2012}. Their contribution is in games,
particularly in auctions with a random number of players. Haviv,
Kerner and Kella~\cite{Kerner09} described a related study as well,
in which there is a Poisson stream of customers to an $M/M/N/N$ loss
system. Platz and Østerdal~\cite{Platz2012} studied a class of
queueing games with a continuum of players and no possibility of
queueing before opening time, and show that the first-in-first-out
discipline induces the worst Nash equilibrium in terms of
equilibrium welfare, while the last-in-first-out queue discipline
induces the best. Another related set of papers concerns
transportation. Vickrey \cite{Vickery69} presented a deterministic
model where commuters have full information about other commuters,
and are able to change their arrival times in order to avoid heavy
traffic. The model assumes a single bottleneck and a constant number
of participants. Delay occurs only when the traffic flow exceeds the
capacity of the bottleneck. Each commuter has a preferred time to
pass the bottleneck. Just as in our model, there is a cost for
arriving earlier than desired, and a tardiness cost. Later studies
were published by Lago and Daganzo \cite{LagoDaganzo07}, Arnott
\cite{Arnottpalma99}, Ostubo and Rapoport \cite{OstuboRapoport08},
and their bibliographies.

\section{The models}\label{sec:model}
In this section we present the details, including cost functions, of both the stochastic and the fluid models.
\subsection{The stochastic model}
We consider a single-server queue in which customers decide when to
arrive. The service times, $X_1,X_2,...$, are independent and
exponentially distributed with parameter $\mu$ and assumed to be work conserving. The server is active
between predetermined opening and closing times, denoted by $-T_1$
and $T_2$ respectively, where  $T_1,T_2\in (0,\infty]$. The total
number of customers, follows a Poisson distribution with mean $\l$. Each customer wishes to minimize three types of
costs: earliness, tardiness and waiting. Earliness is the amount of
time from arrival till \replaced{0}{entering service}, and is
applicable only if the arrival time is before time 0. Tardiness is
the amount of time after time zero that a customer enters service.
Waiting is the time that a customer spends in the queue. All costs
are linear in time, with slopes $\beta_1$, $\beta_2$ and $\alpha$,
for earliness, tardiness and waiting, respectively.  To summarize,
the total cost for a customer who arrives at time $t_1$ and enters
service at time $t_2$ is \deleted{:}  \begin{equation*}
Cost\left(t_1,t_2\right)=
\left\{\begin{array}{cc}-\beta_1t_1+\alpha(-t_1+t_2),&t_1<t_2\leq0,
\\-\beta_1t_1+\alpha(-t_1+t_2) +\beta_2t_2,&t_1\leq
0<t_2,\\\alpha(-t_1+t_2)+\beta_2t_2,&0<t_1<t_2.\end{array}\right.
\end{equation*} Of course, the best thing for a customer would
be to arrive at time 0 and encounter an empty system. That way,
there will be no cost at all. We assume that the decision is taken
{\it prior} to observing the state of the queue. Therefore, while
making a decision, the only known variable\added{s}
\replaced{are}{is} the arrival time, and the marginal expected
waiting and tardiness times. Let $w(t)$ and
$C(t)$ be the expected waiting time and the expected total cost,
respectively, of a customer who arrives at time $t$. We have
\begin{align}
   C(t)=  \begin{cases}
              -t \beta_1 +\alpha w(t) + \beta_2 \mathbb{E}\left(\left[\sum\limits_{i=1}^{N(t)}X_i+t\right]^+\right),      & t<0,   \\
              (\alpha +\beta_2)w(t) + \beta_2 t,       &t\geq 0. \label{eq:cost}
            \end{cases}
\end{align}
 where  $N(t)$ is the number of customers in the system at time $t$. In Lemma~\ref{lem:Ex+t} we present $\mathbb{E}\left(\left[\sum\limits_{i=1}^{N(t)}X_i+t\right]^+\right) $ in terms of the model parameters. \begin{lem}\label{lem:Ex+t}
\begin{align}
 &\mathbb{E}\left(\left[\sum\limits_{i=1}^{N(t)}X_i+t\right]^+\right)  =  \sum_{j=0}^{\infty}p_j(t)\int_{s=0}^{\infty}\sum_{k=0}^{j-1}\frac{1}{k!}e^{-\mu (s-t)}(\mu(s-t))^{k}ds, \label{eq:E(w(t)+t))}
\end{align}
where $p_j(t)=P(N(t)=j)$ is the probability of having $j$ customers
at time $t$.
\end{lem}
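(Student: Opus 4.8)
The plan is to condition on the number of customers present at time $t$ and thereby reduce the problem to computing the expected positive part of a shifted Erlang random variable. First I would invoke the law of total expectation, writing
\begin{equation*}
\mathbb{E}\left(\left[\sum_{i=1}^{N(t)}X_i+t\right]^+\right)=\sum_{j=0}^{\infty}p_j(t)\,\mathbb{E}\left(\left[\sum_{i=1}^{j}X_i+t\right]^+\right),
\end{equation*}
where the interchange of the infinite sum with the expectation is justified by nonnegativity (Tonelli). For $j=0$ the inner expectation vanishes because $t<0$, which matches the empty inner sum $\sum_{k=0}^{-1}$ appearing in the claimed formula.

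Next, for fixed $j\geq 1$, I would argue that $\sum_{i=1}^{j}X_i$ has the Erlang distribution of shape $j$ and rate $\mu$. Here the memorylessness of the exponential service is essential: when $j$ customers are present, the time until the tagged customer enters service is the residual service of the customer currently in service plus the $j-1$ full services ahead of it, and by the lack-of-memory property the residual is again exponential with rate $\mu$. Hence the total is a sum of $j$ independent exponentials, which is precisely what the notation $\sum_{i=1}^{N(t)}X_i$ encodes.

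I would then apply the tail-integral identity $\mathbb{E}(Y^+)=\int_{0}^{\infty}P(Y>s)\,ds$, valid for any random variable $Y$, with $Y=\sum_{i=1}^{j}X_i+t$, yielding
\begin{equation*}
\mathbb{E}\left(\left[\sum_{i=1}^{j}X_i+t\right]^+\right)=\int_{s=0}^{\infty}P\left(\sum_{i=1}^{j}X_i>s-t\right)ds.
\end{equation*}
The last ingredient is the identification of the Erlang tail with a Poisson partial sum: since $\sum_{i=1}^{j}X_i$ is the epoch of the $j$-th event of a Poisson process of rate $\mu$, one has $P\left(\sum_{i=1}^{j}X_i>x\right)=\sum_{k=0}^{j-1}\frac{1}{k!}e^{-\mu x}(\mu x)^{k}$, the probability of fewer than $j$ events in $[0,x]$. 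Substituting $x=s-t$ (note $s-t>0$ since $t<0$) and reassembling the weighted sum over $j$ produces exactly the stated expression.

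The computation is essentially routine; the only points demanding care are the justification for interchanging the infinite sum with the expectation and integral, handled by Tonelli since every integrand is nonnegative, and the clean invocation of memorylessness. I would expect the memorylessness reduction to be the conceptual crux, as it is what makes $\sum_{i=1}^{N(t)}X_i$ the correct representation of the time-to-service in the first place; once that is in hand, the remaining steps are a standard manipulation of the Erlang survival function.
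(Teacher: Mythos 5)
Your proof is correct and follows essentially the same route as the paper's: condition on $N(t)=j$, apply the tail-integral identity $\mathbb{E}\left(Y^+\right)=\int_{0}^{\infty}P(Y>s)\,ds$, and identify the Erlang tail with the Poisson partial sum $\sum_{k=0}^{j-1}\frac{1}{k!}e^{-\mu x}(\mu x)^{k}$. The additional care you take (Tonelli for the interchange, the vanishing $j=0$ term, and the memorylessness justification for treating the workload as an Erlang sum) only makes explicit what the paper leaves implicit.
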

\begin{proof}
\begin{align*}
&\mathbb{E}\left(\left[\sum\limits_{i=1}^{N(t)}X_i+t\right]^+\right)  =  \sum_{j=1}^{\infty}p_j(t)\mathbb{E}\left(\left[\sum\limits_{i=1}^{j}X_i+t\right]^+\right)\\
&= \sum_{j=1}^{\infty}p_j(t)\int_{s=0}^{\infty}\mathbb{P}
\left( \left[\sum_{i=1}^{j}X_i+t\right]^+>s\right)ds \\
&=
\sum_{j=1}^{\infty}p_j(t)\int_{s=0}^{\infty}\sum_{k=0}^{j-1}\frac{1}{k!}e^{-\mu
(s-t)}(\mu(s-t))^{k}ds.
\end{align*}

\end{proof}
\begin{remark}
The cost functions for $t<0$ and $t>0$ differ in two aspects. The first one is that an individual who arrives at $t<0$ needs to consider all three possible types of costs. However, an individual who arrives at $t>0$ avoids any earliness cost, and hence must consider only two types of costs. The second difference relates to the tardiness cost. For those who arrived after time 0, the tardiness time is simply the expected waiting time plus the tardiness time upon arrival. This is opposed to the case for those who arrive at $t<0$, where we need to consider the fact that there may not be a tardiness cost. \label{remark:costs}
\end{remark}

\subsection{The fluid model}
In the fluid model each customer is considered as a tiny drop, where
each drop is served in zero time. As opposed to the regular
stochastic model, here we assume a constant volume of drops, which
is denoted by $\Lambda$. The server serves $\mu$ of them in each
time unit. If the total volume in the queue upon arrival is $\eta$,
then the waiting time will be exactly $\frac{\eta}{\mu}$. The
earliness, tardiness and waiting parameters, $\beta_1$, $\beta_2$
and $\alpha$, are as above. We denote by $F(t)$ the proportion of
drops that arrived prior to time $t$. Fluid models are less
realistic but can occasionally provide a good approximation to the
stochastic model. They fit better in cases where both the arrival
volume and the service rate are high, and no randomness in service
is involved. \replaced{They}{It} allows us to give an explicit and
simple formula for the Nash equilibrium and the socially optimal
strategies, which makes computing the PoA possible.
\section{The case without opening and closing time constraints}\label{sec:poisonnocons}
In this section we present our analysis of the unconstrained models, i.e., $T_1=\infty$, and $T_2=\infty$, for both the stochastic and the fluid models.
\subsection{Stochastic model} \label{sec:poisonnoconssub}
In this section we obtain the symmetric Nash equilibrium arrival strategy. We show that it is a continuous distribution with a support that is a connected interval, $(-T_{e_1},T_{e_2})$, with  $T_{e_1},T_{e_2}\geq 0$. We observe that $f(t)$, the density function associated with the Nash equilibrium strategy, is continuous everywhere except at the opening point $-T_{e_2}$, and at time 0.  When we study the properties of the distribution that represents a symmetric Nash equilibrium, we observe that it cannot contain an atom. This is because, if there were an atom and a customer arrived an infinitesimally small moment earlier, he would achieve an atomic gain by paying an infinitesimal cost. We prove the above-mentioned properties in Lemmas~\ref{lem:continoius density} and~\ref{lem:discontin}.
\begin{lem}
In the unconstrained model, the mixed equilibrium strategy comes with an almost everywhere continuous density function, and has connected support (that is, there are no gaps in the arrival distribution).
    \label{lem:continoius density}
\end{lem}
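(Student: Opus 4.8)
The plan is to exploit the defining feature of a symmetric mixed Nash equilibrium: the expected cost $C(t)$ must be constant, say equal to $C^*$, at every $t$ in the support of the arrival distribution, and must satisfy $C(t)\ge C^*$ at every $t$ outside the support. Since each customer is routed to the queue with the equilibrium density, the arrival stream is a Poisson process of intensity $\lambda f(t)$, and $N(t)$ evolves as the queue length of a time-inhomogeneous $M/M/1$ system with birth rate $\lambda f(t)$ and death rate $\mu$. Writing $p_0(t)=P(N(t)=0)$ and using work conservation together with the memorylessness of the exponential service, we have $w(t)=\mathbb{E}[N(t)]/\mu$; summing the Kolmogorov forward equations gives the balance relation
\begin{equation*}
\mu\, w'(t)=\lambda f(t)-\mu\bigl(1-p_0(t)\bigr),
\end{equation*}
which is the engine for everything that follows. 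Because the $p_j(t)$ solve a linear system of ODEs driven by the (atomless) density $f$, they are continuous, a fact I will use repeatedly.

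First I would prove continuity of the density on each side of the origin. On the part of the support lying in $t>0$ the equilibrium condition reads $C(t)=(\alpha+\beta_2)w(t)+\beta_2 t=C^*$; differentiating yields $w'(t)=-\beta_2/(\alpha+\beta_2)$, and substituting into the balance relation gives
\begin{equation*}
f(t)=\frac{\mu}{\lambda}\Bigl(1-p_0(t)-\frac{\beta_2}{\alpha+\beta_2}\Bigr).
\end{equation*}
Since $p_0$ is continuous, $f$ is continuous on the positive part of the support. The same program applies on $t<0$, except that one must first differentiate the tardiness term $\mathbb{E}\bigl([\sum_{i=1}^{N(t)}X_i+t]^+\bigr)$ of Lemma~\ref{lem:Ex+t}; this again expresses $f(t)$ through the continuous functions $p_j(t)$, so $f$ is continuous there too. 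Continuity can genuinely fail only at $t=0$, where the cost function itself has a kink (earliness switches off), and at the endpoints of the support, which accounts for the ``almost everywhere'' qualifier.

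The connectedness of the support I would establish by contradiction. Suppose there is a maximal open gap $(a,b)$ inside the convex hull of the support, so that $f\equiv 0$ on $(a,b)$ while $C(a)=C(b)=C^*$ and $C(t)\ge C^*$ for $t\in(a,b)$. On the gap the balance relation collapses to $w'(t)=-(1-p_0(t))\le 0$. Taking for concreteness a gap with $a>0$ and computing the right derivative of the cost at $a$,
\begin{equation*}
C'(a^+)=(\alpha+\beta_2)w'(a^+)+\beta_2=-(\alpha+\beta_2)\bigl(1-p_0(a)\bigr)+\beta_2,
\end{equation*}
I would use that the density just to the left of $a$ is nonnegative, which by the formula above forces $1-p_0(a)\ge \beta_2/(\alpha+\beta_2)$, and hence $C'(a^+)\le 0$. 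Thus the cost strictly decreases as one enters the gap, so $C$ falls below $C^*$ immediately to the right of $a$, contradicting $C(t)\ge C^*$ on $(a,b)$; therefore no such gap exists.

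The main obstacle will be carrying this last step through in the two remaining configurations, namely a gap lying entirely in $t<0$ and a gap straddling the origin. Both require differentiating the tardiness expectation from Lemma~\ref{lem:Ex+t} and tracking the signs of the resulting terms, and the straddling case must additionally absorb the kink at $0$; the borderline cases in which the left-hand density vanishes at $a$ also need separate, careful handling so that ``strictly decreases'' is not weakened into a non-contradiction. Everything else is bookkeeping on the linear ODE system for the $p_j(t)$.
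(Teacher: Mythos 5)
Your overall skeleton is the right one (constant cost $C^*$ on the support, $C(t)\ge C^*$ off it, contradiction via a hypothetical gap), and your treatment of a gap lying strictly in $t>0$ is essentially sound; even the borderline case $f(a^-)=0$ that you flag can be closed, since inside the gap $1-p_0(t)$ strictly decreases, so the cost then rises strictly above $C^*$ and can never return to $C^*$ at $b$. But note that this is precisely the case the paper does \emph{not} prove: for gaps entirely to the right of $0$ it simply cites~\cite{Haviv13}. The cases the paper's proof actually establishes --- a gap whose lower bound $\tau_0$ is negative, including gaps straddling the origin --- are exactly the ones you defer as ``the main obstacle.'' So as written, the proposal has a genuine hole, and it sits at the heart of the lemma: the configurations created by the paper's new ingredient, the earliness cost, are the unproven ones.

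The missing idea is that no differentiation of the tardiness expectation in Lemma~\ref{lem:Ex+t} or sign bookkeeping is needed, because inside a gap nobody arrives: if a customer postpones his arrival from $\tau_0$ to $\tau_0+\Delta$ (with $\tau_0+\Delta<0$), the set of customers ahead of him is unchanged, so pathwise his service-entry epoch is unchanged whenever the server is busy, and if the server has gone idle he enters service upon arrival, which is still before time $0$; in every scenario the tardiness cost is \emph{exactly} the same. Meanwhile earliness falls by $\beta_1\Delta$ and waiting falls by $\alpha(1-p_0(\tau_0))\Delta+o(\Delta)$, so in your derivative language $C'(t)\le-\beta_1<0$ on $(\tau_0,\min(b,0))$, and the cost drops strictly below $C^*$ immediately inside the gap --- a contradiction with no borderline caveats, valid whether or not the density vanishes at $\tau_0$. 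This single observation also disposes of the straddling case, since it suffices to move within the negative portion of the gap, so there is no kink at $0$ to ``absorb.'' Finally, be aware that the a.e.\ continuity claims you prove via $f(t)=\frac{\mu}{\lambda}\bigl(1-p_0(t)-\frac{\beta_2}{\alpha+\beta_2}\bigr)$ are not part of the paper's proof of this lemma at all; the paper establishes the continuity/discontinuity structure separately in Lemma~\ref{lem:discontin}.
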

\begin{proof}
Our proof is based on a contradiction argument. That is, we assume the existence of a gap and then show that the individual benefits from deviating, and prefers not to adopt the same strategy that all the others do. We treat two mutually exclusive cases separately. These two cases are 1) the gap interval is either to the left of time 0 or contains time 0, and 2) the gap interval is to the right of time 0. The proof for case 2 (the entire gap is positive) appears precisely in~\cite{Haviv13}. Thus, we provide here a proof for the first case only. Let $\tau_0$ and $\tau_1$ be the lower and upper bounds of the gap, respectively, with $\tau_0<0$. We now examine what the individual can gain if he postpones his arrival by a small $\Delta$. By doing so, he reduces his earliness cost by $\Delta \beta_1$, and his waiting cost by $\alpha(1-p_0(\tau_0))\Delta+o(\Delta)$, and as long as he arrives before time 0 his tardiness cost remains the same. Thus, we eliminate gaps with a lower bound that is smaller than 0. This means that case 1 is not possible.
\end{proof}
\noindent We observe that, for any arrival time within the equilibrium
support, the expected cost is $\beta_1 T_{e_1}$. This is because, if
the first customer to arrive comes at time $-T_{e_1}$, he will
clearly pay only an earliness cost. Hence, his cost is $\beta_1
T_{e_1}$ and therefore his expected cost \replaced{too}{is as well}.
Combined with the fact that the cost function is constant for every
value of $t$ along $(-T_{e_1},T_{e_2})$, this will be the expected
cost for all customers at equilibrium regardless of when they
arrived. So far, we have given general properties of the arrival
distribution, $f(t)$. However, $f(t)$ is yet to be obtained. Since
an analytic expression for obtaining $f(t)$ is implicit, we focus on
how to compute it numerically. We next present a set of conditions
which are satisfied uniquely by $T_{e_1}$, $T_{e_2}$ and $f(t)$.
Then, we describe a numerical procedure that computes them. First,
due to the lack of atoms,
\begin{align}
   \int_{t=-T_{e_1}}^{T_{e_2}} f(t)dt=1. \label{eq:f(t)=1)}
\end{align}
Second, due to the fact that  $C(t)$ is constant along $[-T_{e_1},T_{e_2}]$, and hence the derivative is zero, we have
\begin{align}
- \beta_1 +\alpha w'(t) + \beta_2 \left( \mathbb{E}
\left(\sum_{i=1}^{N(t)}X_i+t\right)^{+}\right)^{'} =0, \quad
-T_{e_1}\leq t < 0, \label{eq:derivative1=0}
\end{align}
\begin{align}
\hspace{3.5cm}(\alpha+\beta_2)w'(t) +\beta_2 =0, \quad 0 < t \leq
T_{e_2}. \label{eq:derivative2=0}
\end{align}
By standard arguments from dynamics in queueing processes we get that
\begin{align}
w'(t)=\frac{\l f(t) -\mu (1-p_0(t))}{\mu}, \quad 0 \leq t \leq
T_{e_2}. \label{eq:dynamic}
\end{align}
Similar to~\cite{Haviv13}, by combining (\ref{eq:derivative2=0}) and (\ref{eq:dynamic}), we conclude that
\begin{align}
f(t)= \frac{(1-p_0(t))\mu}{\l}-\frac{\beta_2
\mu}{(\alpha+\beta_2)\l}, \quad 0 \leq t \leq T_{e_2}.\label{eq:ft}
\end{align}
By the forward Kolomogorov equations, we also have
\begin{align}
 p_0'(t)=p_1(t)\mu -p_0(t)\l f(t),     \quad -T_{e_1} \leq t \leq T_{e_2}.        \label{eq:differen0}
\end{align}
and for $k\geq 1$
\begin{align}
p_k'(t)=-p_k(t)(\mu+\l f(t)) + p_{k-1}(t)\l f(t)+p_{k+1}(t)\mu,
\quad -T_{e_1} \leq t \leq T_{e_2}. \label{eq:differenk}
\end{align}
We next identify the behavior of the system at the latest arrival
time (under Nash equilibrium). From~\cite{Haviv13} (Lemma 2.2), we
also have that in the unconstrained model:
 \begin{align}
 f(T_{e_2})=0 \quad \textrm{or, equivalently,} \quad p_{0}(T_{e_2})=\frac{\alpha}{\alpha+\beta_2}. \label{eq:f(Te)=0}
\end{align}
This result will be useful in the numeric procedure.

\begin{remark}\label{remark:numerical1}
After deriving all the conditions above we can use a numerical
procedure to find the equilibrium strategy. For practical reasons,
we limit ourselves to a maximum number of possible arrivals, denoted
by $N_{\max}$, such that $$N_{\max}=\sup\{n\in \mathbb{N}
:\mathbb{P}(N>n)\leq 10^{-2}\}.$$ The
corresponding forward Kolomogorov equation is
\begin{align}
p_{N_{\max}}'(t)=-p_{N_{\max}}(t)\mu + p_{N_{\max}-1}(t)\l f(t),
\quad \quad -T_{e_1} \leq t \leq T_{e_2}. \label{eq:differenn}
\end{align}
At equilibrium, customers are starting to arrive at time $-T_{e_1}$, and therefore the initial conditions will be
\begin{align*}
p_0(-T_{e_1})=1,   p_k(-T_{e_1})=0, \quad \forall k \in\{1,2,...,N_{\max}\}.
\end{align*}
Before we give the guidelines that describe the numerical procedure,
we note that we don't have a closed-form expression for $f(t)$ where
$t<0$. Therefore, we extract it from the
Eqs.~(\ref{eq:derivative1=0}),~(\ref{eq:differen0}),~(\ref{eq:differenk})
and~(\ref{eq:differenn}). The numerical procedure is as follows. We
first guess a value for $-T_{e_1}$. Then, while using the dynamics
in~(\ref{eq:differen0}),~(\ref{eq:differenk})
and~(\ref{eq:differenn}), coupled with the equation we extracted for
$f(t)$ for $t<0$,  we compute $f(t)$ and $p_k(t)$ for any $k\in
\{0,1,...,N_{\max}\}$ from $t=-T_{e_1}$ until $t=0$. Then we do the
same for $t>0$, only now for computing $f(t)$ we use
Eq.~(\ref{eq:ft}) instead. Stop when one of the
conditions~(\ref{eq:f(t)=1)}) or~(\ref{eq:f(Te)=0}) are met. If the
stopping criterion was based on condition~(\ref{eq:f(t)=1)}) then we
guess a smaller value of $T_{e_1}$, otherwise we guess a larger one.
This goes on until
conditions~(\ref{eq:f(t)=1)})~and~(\ref{eq:f(Te)=0}) are met
simultaneously, while allowing an error of $\epsilon <10^{-2}$, that
is, the convergence error of the integral of $f(t)$ is no larger
than $\epsilon$ (see Fig.~\ref{fig:firstgrpagh2}). Finally, we note
that a bisection procedure is required for finding $-T_{e_1}$.
\end{remark}
\begin{lem}
In the unconstrained model, the equilibrium arrival strategy has a
positive density \added{function}, $f(t)$, which is a continuous
function except at the points $t=-T_{e_1}$ and $t=0$.
\label{lem:discontin}
\end{lem}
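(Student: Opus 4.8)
The plan is to separate the support, which by Lemma~\ref{lem:continoius density} is a single interval $[-T_{e_1},T_{e_2}]$ with no interior gaps, into the two open pieces $(-T_{e_1},0)$ and $(0,T_{e_2})$, prove positivity and continuity on each, and then treat the points $t=0$ and $t=-T_{e_1}$ by hand. On $(0,T_{e_2})$ the density is the affine function of $p_0(t)$ given by~(\ref{eq:ft}); since the forward Kolmogorov system~(\ref{eq:differen0})--(\ref{eq:differenk}) has $C^1$ solutions on any subinterval where $f$ is continuous, $p_0$ and hence $f$ are continuous there. On $(-T_{e_1},0)$ I have no closed form, so I will first produce one. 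Writing $V(t)=\sum_{i=1}^{N(t)}X_i$ and $g(t)=\mathbb{E}\left(\left[V(t)+t\right]^+\right)$ (the quantity of Lemma~\ref{lem:Ex+t}), I use the tail representation $g(t)=\int_{-t}^{\infty}\mathbb{P}(V(t)>v)\,dv$ and differentiate by Leibniz' rule; because $\int_{0}^{\infty}\mathbb{P}(V(t)>v)\,dv=w(t)$, this yields the identity $g'(t)=\mathbb{P}(V(t)>-t)+w'(t)$. Substituting it, together with the flow relation~(\ref{eq:dynamic}) (valid for $t<0$ by the same dynamic argument), into the stationarity condition~(\ref{eq:derivative1=0}) expresses $f$ on $(-T_{e_1},0)$ as a continuous function of $p_0(t)$ and $\mathbb{P}(V(t)>-t)$, giving continuity on that piece too.

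For positivity, the negative-side expression collects to $f(t)=\frac{\mu}{\lambda}\left[\frac{\beta_1-\beta_2\mathbb{P}(V(t)>-t)}{\alpha+\beta_2}+(1-p_0(t))\right]$; since $\mathbb{P}(V(t)>-t)\le 1-p_0(t)$, the bracket is at least $\frac{\beta_1+\alpha(1-p_0(t))}{\alpha+\beta_2}$, which is strictly positive because $\beta_1>0$. On the positive side $f(t)>0$ is equivalent by~(\ref{eq:ft}) to $p_0(t)<\frac{\alpha}{\alpha+\beta_2}$, and I rule out an interior zero by contradiction: at a point $t_0\in(0,T_{e_2})$ with $f(t_0)=0$ one would have $p_0(t_0)=\frac{\alpha}{\alpha+\beta_2}$, and~(\ref{eq:differen0}) with $f(t_0)=0$ would give $p_0'(t_0)=\mu\,p_1(t_0)>0$, so $p_0$ would cross above the threshold and force $f<0$ immediately afterwards, which is impossible for a density; hence $f>0$ on $(0,T_{e_2})$, vanishing only at the right endpoint as in~(\ref{eq:f(Te)=0}).

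The heart of the argument is the jump at $t=0$. Condition~(\ref{eq:derivative2=0}) gives at once $w'(0^{+})=-\frac{\beta_2}{\alpha+\beta_2}$, while inserting $g'(0^{-})=(1-p_0(0))+w'(0^{-})$ into~(\ref{eq:derivative1=0}) and solving gives $w'(0^{-})=\frac{\beta_1-\beta_2(1-p_0(0))}{\alpha+\beta_2}$. Since $w$ is continuous and, by~(\ref{eq:dynamic}), any jump of $w'$ at $0$ equals $\frac{\lambda}{\mu}$ times the jump of $f$, I conclude $f(0^{-})-f(0^{+})=\frac{\mu}{\lambda}\bigl(w'(0^{-})-w'(0^{+})\bigr)=\frac{\mu\,(\beta_1+\beta_2 p_0(0))}{\lambda(\alpha+\beta_2)}>0$, a genuine downward jump. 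Evaluating the negative-side formula at $t=-T_{e_1}$, where $p_0(-T_{e_1})=1$ and $\mathbb{P}(V(t)>-t)=0$, gives $f(-T_{e_1}^{+})=\frac{\mu\beta_1}{\lambda(\alpha+\beta_2)}>0$, whereas $f\equiv0$ to the left; this is the second discontinuity, and it is precisely the earliness slope $\beta_1$ that keeps the density from vanishing at the opening, unlike in~\cite{Haviv13}.

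The step I expect to be the main obstacle is the rigorous justification of $g'(t)=\mathbb{P}(V(t)>-t)+w'(t)$: the $t$-dependence of $\mathbb{E}\left(\left[V(t)+t\right]^+\right)$ enters both through the additive shift and through the time-varying law of $N(t)$, so I must differentiate under the integral and interchange the derivative with the infinite sum over $j$ appearing in Lemma~\ref{lem:Ex+t}. I would control this through the tail representation above, in which the moving lower limit contributes the boundary term $\mathbb{P}(V(t)>-t)$ and the evolution of the queue-length distribution contributes $\frac{d}{dt}\mathbb{E}[V(t)]=w'(t)$, with dominated convergence applied to the partial sums to legitimise the interchange.
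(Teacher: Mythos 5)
Your treatment of the positive side and of the jump at $t=0$ is essentially sound, but the identity on which your entire negative-side analysis rests, $g'(t)=\mathbb{P}(V(t)>-t)+w'(t)$ for $t\in(-T_{e_1},0)$ (in your notation $V(t)=\sum_{i=1}^{N(t)}X_i$, $g(t)=\mathbb{E}[(V(t)+t)^{+}]$), is false, and the defect is structural rather than the interchange-of-limits issue you flag as the ``main obstacle.'' Leibniz' rule applied to $g(t)=\int_{-t}^{\infty}\mathbb{P}(V(t)>v)\,dv$ gives
\begin{equation*}
g'(t)=\mathbb{P}(V(t)>-t)+\int_{-t}^{\infty}\frac{\partial}{\partial t}\mathbb{P}(V(t)>v)\,dv,
\end{equation*}
whereas $w'(t)=\int_{0}^{\infty}\frac{\partial}{\partial t}\mathbb{P}(V(t)>v)\,dv$. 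For $t<0$ these two integrals differ by $\int_{0}^{-t}\frac{\partial}{\partial t}\mathbb{P}(V(t)>v)\,dv$, which is generically nonzero: just after $-T_{e_1}$ the workload law is building up from the empty state, so $\mathbb{P}(V(t)>v)$ is strictly increasing in $t$ for every $v>0$ and the dropped term is strictly positive. Consequently your closed form for $f$ on $(-T_{e_1},0)$, the positivity bound you extract from it, and the endpoint value $f(-T_{e_1}^{+})=\frac{\mu\beta_1}{\lambda(\alpha+\beta_2)}$ are all wrong as stated (carrying the dropped term through (\ref{eq:derivative1=0}) at the opening point yields a strictly larger value); no dominated-convergence argument can recover them, because a term is missing, not merely unjustified. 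The paper proves the discontinuity at $-T_{e_1}$ by a much lighter route you should adopt: by (\ref{eq:differen0}), since $p_0(-T_{e_1})=1$ and $p_1(-T_{e_1})=0$, one has $f(-T_{e_1}^{+})=-p_0'(-T_{e_1}^{+})/\lambda>0$, with no closed form for $f$ required.

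What does survive is your argument at $t=0$: the dropped term is $O(|t|)$, so your identity is valid in the limit $t\to 0^{-}$, and your conclusions $w'(0^{-})=\frac{\beta_1-\beta_2(1-p_0(0))}{\alpha+\beta_2}$ and $f(0^{-})-f(0^{+})=\frac{\mu\left(\beta_1+\beta_2\,p_0(0)\right)}{\lambda(\alpha+\beta_2)}>0$ are correct under the smoothness assumed throughout. Indeed this is more careful than the paper's own step, which passes from the value limit $\lim_{t\to 0^{-}}\mathbb{E}\left[(V(t)+t)^{+}\right]=w(0^{-})$ to the derivative identity $g'(0^{-})=w'(0^{-})$, thereby losing exactly the boundary term $1-p_0(0)$ and arriving at $w'(0^{-})=\frac{\beta_1}{\alpha+\beta_2}$; both versions give a strictly positive jump, so the lemma is unaffected, but yours is the correct left derivative. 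To repair your proposal: keep the $t=0^{-}$ limit argument, establish the jump at $-T_{e_1}$ via (\ref{eq:differen0}) as above, and either abandon the global negative-side closed form or restate it with the extra term $\int_{0}^{-t}\frac{\partial}{\partial t}\mathbb{P}(V(t)>v)\,dv$ included and its sign controlled.
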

\begin{proof}
We commence with the point $t=-T_{e_1}$. According to the
equilibrium arrival strategy, customers are starting to arrive at
$t=-T_{e_1}$, which means that $f(-T_{e_1}^-)=0$. We next show that
$f(-T_{e_1}^+)\neq0$, and by that we prove discontinuity at
$-T_{e_1}$. Due to the fact that the system is empty until time
$-T_{e_1}$, using~(\ref{eq:differen0}) we have
$f(-T_{e_1}^+)=-\frac{p'_0(-T_{e_1}^+)}{\l}$. Since customers are
starting to arrive at time $-T_{e_1}$, the probability of zero
customers in the system is decreasing and hence
$p'_0(-T_{e_1}^+)<0$; therefore $f(-T_{e_1}^+)>0$ as well. The next
discontinuity point is at $t=0$. From~(\ref{eq:dynamic}) one can see
that if $w'(t)$ has a discontinuous point at $t=0$, then $f(t)$ will
have the same. This is because only $f(t)$ can be discontinuous in
the equation. From~(\ref{eq:derivative1=0}) we get $ w'(t)
=\frac{\beta_1-\beta_2 \left(\mathbb{E}
\left(\sum\limits_{i=1}^{N(t)}X_i+t\right)^{+}\right)^{'}}{\alpha}
$, and specifically for $t=0^{-}$ we assert that $\lim_{t\rightarrow
0} \mathbb{E} \left(\sum\limits_{i=1}^{N(t)}X_i+t\right)^{+}=w(0^-)$, and hence
$$ w'(0^{-}) =\frac{\beta_1}{\alpha+\beta_2}, $$ and
from~(\ref{eq:derivative2=0}) we get
$w'(t)=\frac{-\beta_2}{\beta_2+\alpha}$ and specifically, for
$t=0^+$:$$ w'(0^{+}) =\frac{-\beta_2}{\alpha+\beta_2}. $$ Thus, $
w'(0^{-})\neq w'(0^{+})$, which according to~(\ref{eq:dynamic})
means that $f(t)$ has a discontinuity at $t=0$.
\end{proof}

\begin{thm}
In the unconstrained model, a unique equilibrium arrival strategy
with a positive density $f(t)$ along the interval $(-
T_{e_1},T_{e_2})$ will be formed. $f(t)$, $T_{e_1}$ and $T_{e_2}$
obey Eqs.~(\ref{eq:f(t)=1)}) to~(\ref{eq:f(Te)=0}), where
$T_{e_1}>0$ and $T_{e_2}>0$.  $f(t)$ is continuous except at $t=0$.
\label{thm:f(t)unique}
\end{thm}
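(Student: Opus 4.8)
The plan is to split the claim into three parts: (i) that every symmetric equilibrium must have the stated structure and satisfy Eqs.~(\ref{eq:f(t)=1)})--(\ref{eq:f(Te)=0}); (ii) that this system admits a solution with $T_{e_1},T_{e_2}>0$; and (iii) that the solution is unique, so that there is exactly one equilibrium. Part (i) is essentially already in hand: Lemma~\ref{lem:continoius density} gives the connected support and almost-everywhere continuity, Lemma~\ref{lem:discontin} gives positivity of $f$ and locates the only interior discontinuity at $t=0$, and the equilibrium indifference condition---constancy of $C(t)$ on the support---together with the queueing dynamics yields Eqs.~(\ref{eq:derivative1=0})--(\ref{eq:f(Te)=0}). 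To close the equivalence I would also verify that a profile solving the system is genuinely an equilibrium, which reduces to checking that no deviation \emph{outside} $(-T_{e_1},T_{e_2})$ is profitable; this is easy, since arriving before $-T_{e_1}$ only adds earliness cost against an empty system, and arriving after $T_{e_2}$ only adds tardiness, so the cost strictly exceeds the on-support value in both directions. The real content is therefore (ii) and (iii), which I would treat by a shooting argument on the single scalar parameter $T_{e_1}$.

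First I would establish positivity of the endpoints. Since the earliest arrival at $-T_{e_1}$ meets an empty system and pays only the earliness cost $\beta_1 T_{e_1}$, and constancy of $C(t)$ forces every arrival in the support to incur this same expected cost, an equilibrium with $T_{e_1}=0$ would make every customer pay zero---impossible whenever there is positive probability of congestion (i.e.\ $\l>0$), so $T_{e_1}>0$. For $T_{e_2}>0$ I would argue that if all arrivals occurred by time $0$, the indifference/terminal requirement $p_0(T_{e_2})=\alpha/(\alpha+\beta_2)$ cannot be reconciled with the density formula~(\ref{eq:ft}) at $0^+$: a customer arriving just after $0$ would otherwise break indifference, forcing $f(0^+)>0$ and hence $T_{e_2}>0$.

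Next I would set up the forward initial-value problem. Fix a trial value $\tau>0$ and impose $p_0(-\tau)=1$, $p_k(-\tau)=0$ for $k\geq 1$. On $[-\tau,0)$ I extract $f(t)$ from the indifference equation~(\ref{eq:derivative1=0}), noting that the derivative of $\mathbb{E}\left(\left[\sum_{i=1}^{N(t)}X_i+t\right]^+\right)$ is, by Lemma~\ref{lem:Ex+t}, a smooth function of $t$ and of the $p_k(t)$; substituting this $f$ into~(\ref{eq:differen0}) and~(\ref{eq:differenk}) gives a closed ODE system with a locally Lipschitz right-hand side, so by Picard--Lindel\"{o}f its solution exists and is unique. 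On $(0,T_{e_2}]$ I instead feed the explicit density~(\ref{eq:ft}) into the same Kolmogorov system. For each $\tau$ I run the flow forward to the first time $T_{e_2}(\tau)$ at which $p_0=\alpha/(\alpha+\beta_2)$ (equivalently $f=0$) holds, and define the total-mass functional $G(\tau)=\int_{-\tau}^{T_{e_2}(\tau)}f(t)\,dt$. An equilibrium is precisely a root of $G(\tau)=1$.

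Finally I would show that $G$ is continuous and strictly monotone in $\tau$, giving a unique root and hence a unique equilibrium. Continuity follows from smooth dependence of ODE solutions on the parameter $\tau$ (and well-definedness of the stopping time $T_{e_2}(\tau)$, which needs a short argument that the terminal level is reached). The monotonicity is the crux: a larger $\tau$ means arrivals begin earlier, so more queue accumulates before time $0$ (smaller $p_0$, larger $p_k$ for $k\geq1$), which via~(\ref{eq:ft}) raises the post-$0$ density and lengthens the support, forcing $G$ to increase; making this rigorous requires a comparison argument controlling how the entire coupled $\{p_k\}$ trajectory---and with it $T_{e_2}(\tau)$---responds to a shift in $\tau$. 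This monotone-dependence step is where I expect the genuine difficulty, matching the bisection logic of Remark~\ref{remark:numerical1}; everything else is either inherited from the preceding lemmas or a routine IVP argument. The asserted continuity of $f$ away from $t=0$ is then immediate from Lemma~\ref{lem:discontin} restricted to the open support $(-T_{e_1},T_{e_2})$.
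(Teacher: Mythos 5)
Your part (i) is indeed what the paper inherits from Lemmas~\ref{lem:continoius density} and~\ref{lem:discontin}, and your check that off-support deviations (before $-T_{e_1}$, after $T_{e_2}$) are unprofitable is a worthwhile point the paper leaves implicit. The problem is part (iii): your uniqueness argument stands or falls on the strict monotonicity (and continuity) of the total-mass functional $G(\tau)$, and you explicitly defer exactly that step (``where I expect the genuine difficulty''). Nothing in the proposal establishes it. The heuristic ``start earlier $\Rightarrow$ more queue before time $0$ $\Rightarrow$ larger post-$0$ density and longer support'' is a claim about how the entire infinite coupled system $\{p_k(\cdot)\}$, the density extracted from~(\ref{eq:derivative1=0}) on $[-\tau,0)$, the density~(\ref{eq:ft}) after $0$, and the stopping time $T_{e_2}(\tau)$ jointly respond to $\tau$, and no comparison principle for this system is supplied; the bisection logic of Remark~\ref{remark:numerical1} is a numerical heuristic, not a proof of it. Existence has a smaller gap of the same kind: continuity plus strict monotonicity of $G$ does not by itself produce a root of $G(\tau)=1$; you would also need the boundary behavior ($G$ below $1$ as $\tau\downarrow 0$, above $1$ for large $\tau$), which you do not address.

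The paper sidesteps this analytic difficulty entirely with a probabilistic argument, Lemma~\ref{lem:coupling}. Suppose two distinct equilibria existed, with opening points $-T'_{e_1}<-T_{e_1}$. First, their constant equilibrium costs are $\beta_1 T'_{e_1}$ and $\beta_1 T_{e_1}$ respectively, since the first arrival in each meets an empty system and pays only earliness; so the earlier-starting equilibrium is strictly more expensive. Second, by a coupling argument, at the first time $t_0$ where the two CDFs meet, the same mass has arrived under both strategies but strictly earlier under the earlier-starting one, so the server has had more time to clear it; its queue length at $t_0$ is stochastically smaller, hence the cost of arriving at $t_0$ under it is strictly lower. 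These two conclusions reverse the order of the two (constant) costs, a contradiction. Note that this single-point cost comparison is precisely the missing tool in your plan: applied to two shooting trajectories with $\tau_1<\tau_2$ (each keeps its cost constant, $\beta_1\tau_i$, by construction), it shows their CDFs can never meet, which delivers the monotone dependence of $G$ you need. If you wish to keep the shooting architecture, prove the coupling lemma and derive the monotonicity from it; as the proposal stands, the crux of uniqueness is identified but not proven.
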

Before proving the theorem we provide the following lemma and the
definitions. Let there be two different arrival strategies with the
cumulative arrival distribution $F_1(\cdot)$ and $F_2(\cdot)$ and
let $w_1$ and $w_2$ be the lower bounds of the arrival support of
$F_1$ and $F_2$ respectively, where $w_1>w_2$.  Also, let
$N_1(\cdot)$ and $N_2(\cdot)$ be the number of customers in the
system according to $F_1$ and $F_2$ respectively. Finally, let $t_0$
be the lowest value in which $F_1(\cdot)$ and $F_2(\cdot)$ meet.
Formally, $t_0=\inf\{t|F_1(t)=F_2(t),t>w\}.$

\begin{lem}
The expected cost of arriving at $t_0$ under $F_2$ is lower than
arriving under $F_1$.\label{lem:coupling}
\end{lem}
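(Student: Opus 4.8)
The plan is to prove Lemma~\ref{lem:coupling} by a sample-path coupling of the two arrival-and-service systems: I will show that a customer arriving at the crossing time $t_0$ finds a stochastically smaller queue under $F_2$ than under $F_1$, and then invoke monotonicity of the cost in the queue length. First I would fix the ordering of the two distributions up to $t_0$. Since $w_1>w_2$, on $(w_2,w_1)$ we have $F_1\equiv 0<F_2$, and because $t_0$ is the \emph{first} point above $w_2$ at which the CDFs agree, continuity of the equilibrium distributions (Lemma~\ref{lem:continoius density}) gives $F_2(t)\ge F_1(t)$ for all $t\in(w_2,t_0)$, with equality exactly at $t_0$; in words, $F_2$ front-loads its mass relative to $F_1$ on $[w_2,t_0]$. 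Note also that the number of arrivals in $(-\infty,t_0]$ is Poisson with mean $\l F_i(t_0)$ under $F_i$, so since $F_1(t_0)=F_2(t_0)$ these two counts have the \emph{same} distribution.

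Next I would build the coupling. Conditionally on the common total $N$ of arrivals by $t_0$, each arrival time is i.i.d.\ with conditional law $F_i(\cdot)/F_i(t_0)$ on $(-\infty,t_0]$; because $F_1(t_0)=F_2(t_0)$ and $F_2\ge F_1$ on $[w_2,t_0]$, these conditional laws are stochastically ordered, so the arrivals can be coupled customer-by-customer with the $k$-th arrival under $F_2$ occurring no later than the $k$-th arrival under $F_1$. The service mechanism I would couple through a single rate-$\mu$ Poisson clock of potential service completions; this is legitimate because the exponential service is memoryless and the server is work-conserving, so at every clock tick a genuine departure occurs in system $i$ precisely when its queue is nonempty.

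The main step, and the one I expect to be the crux, is the sample-path comparison of the departure counts. Writing $A_i(t)$ and $D_i(t)$ for arrivals and departures by time $t$ in system $i$, the coupling gives $A_2(t)\ge A_1(t)$ for all $t\le t_0$, and I would then show $D_2(t)\ge D_1(t)$ for all $t\le t_0$ by induction over the shared ticks: if the departure counts agree just before a tick, then system $1$'s queue being nonempty ($A_1>D_1$) forces system $2$'s to be nonempty as well ($A_2\ge A_1>D_2$), so system $2$ departs whenever system $1$ does; and if system $2$ is already strictly ahead, it can fall behind by at most one position per tick and hence remains ahead. Evaluating at $t_0$, where $A_1(t_0)=A_2(t_0)$, yields $N_2(t_0)=A_2(t_0)-D_2(t_0)\le A_1(t_0)-D_1(t_0)=N_1(t_0)$ on every sample path.

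Finally I would convert the queue-length ordering into a cost ordering. For a tagged customer arriving at the fixed time $t_0$, the earliness term and the explicit tardiness term $\beta_2 t_0$ in \eqref{eq:cost} depend only on $t_0$ and are identical in the two systems; the only stochastic contributions are the waiting cost $\alpha w(t_0)$ and the service-based tardiness term of Lemma~\ref{lem:Ex+t}, both of which are nondecreasing functions of the number $N_i(t_0)$ found in the system, since by memorylessness the residual work ahead of the tagged customer is a sum of $N_i(t_0)$ i.i.d.\ $\mathrm{Exp}(\mu)$ service times that can be coupled termwise. As $N_2(t_0)\le N_1(t_0)$ holds pathwise, taking expectations shows the expected cost of arriving at $t_0$ is no larger under $F_2$ than under $F_1$ (and strictly smaller whenever the queues differ with positive probability), which is the claim.
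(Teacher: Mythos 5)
Your proposal is correct and follows essentially the same route as the paper's own proof: a coupling argument showing that, because arrivals under $F_2$ occur no later than under $F_1$ on $[w_2,t_0]$, service completions under $F_2$ also occur no later, hence $N_2(t_0)\leq_{st}N_1(t_0)$, and the cost of arriving at the common time $t_0$ is monotone in the queue length found on arrival. The only difference is level of detail: you spell out the steps (the ordering $F_2\geq F_1$ up to $t_0$, the quantile coupling of the conditional arrival laws, the common rate-$\mu$ service clock, and the induction giving $D_2(t)\geq D_1(t)$) that the paper's terse proof leaves implicit.
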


\begin{proof}
We first state that the difference of the two expected costs can be
only via the workload since they arrived at the same time. By
employing a coupling argument, we claim that $N_2(t_0)\leq _{st}
N_1(t_0)$. This is because the arrivals prior to $t_0$ according to
$F_2$ occurred earlier than those according to $F_1$. Hence, according
to $F_2$, service completions occurred no later than those according
to $F_1$.\ Consequently the workload under $F_2$ is
smaller and by that we complete the proof.
\end{proof}
\noindent We next prove Theorem~\ref{thm:f(t)unique}.
\begin{proof}

We need to prove that the equilibrium is unique, since everything
else has already been argued. Recall that by setting a starting
arrival point at equilibrium, $-T_{e_1}$, and due to the a constant
cost function (under equilibrium), a unique $f(t)$
 results. Furthermore, $f(t)$ and $-T_{e_1}$ determine the value of
 $T_{e_2}$ uniquely. Aiming for a contradiction we assume an alternative equilibrium strategy in which customers are arriving
 along the support $(-T'_{e_1},T'_{e_2})$ where, $-T'_{e_1}<-T_{e_1}$ (w.l.o.g.).\ We note that the
 expected costs of the original and alternative equilibria are $\beta_1 T_{e_1}$ and $\beta_1 T'_{e_1}$ respectively, with $\beta_1 T_{e_1}<\beta_1 T'_{e_1}$.\ From Lemma~\ref{lem:coupling},
  we get that there exist a point $t_0$ in which the expected cost
  under the alternative strategy is lower than arriving under the original
  strategy since $-T_{e_1}'<-T_{e_1}$. Since under mixed equilibrium the cost is constant, the fact that the orders between the costs at opening and at $t_0$ are reversed, rules out the existence of two equilibria.

\end{proof}
\subsection{Fluid model}
Let's consider the first drop to arrive. Clearly, it will not be after time $0$. Moreover, this drop will be served immediately. This means that only earliness and waiting costs are under consideration. As time goes by and a workload accumulates, there will be a point in time at which a drop that arrived prior to time $0$ will enter service after time $0$. Hence, all types of cost are under consideration. Once drops start to arrive after time $0$, they avoid earliness cost. Consequently, there are three different time intervals with different cost function structures, and hence the mixed equilibrium arrival strategy includes three different formulas for the density. Our notation for $T_{e_1}$ and $T_{e_2}$ remains the same as for the stochastic model.
\begin{thm}\label{thm:fluidnocons}
In the unconstrained model, the unique equilibrium density is a
stepwise function\deleted{s} with three steps.
\begin{align*}
  f(t) = \begin{cases}
\frac{\alpha+\beta_1}{\alpha}\frac{\mu}{\Lambda},  &\hspace{0.85cm}  -\frac{\beta_2 \Lambda}{(\beta_1+\beta_2)\mu}\leq t\leq-\frac{\beta_1}{\alpha+\beta_1}\frac{\beta_2 \Lambda}{(\beta_1+\beta_2)\mu},\\
\frac{\alpha+\beta_1}{\alpha+\beta_2}\frac{\mu}{\Lambda},     &  -\frac{\beta_1}{\alpha+\beta_1}\frac{\beta_2 \Lambda}{(\beta_1+\beta_2)\mu}\leq  t \leq 0,   \\
\frac{\alpha}{\alpha+\beta_2}\frac{\mu}{\Lambda},       &
\hspace{2.3cm} 0\leq  t \leq \frac{\beta_1
\Lambda}{(\beta_1+\beta_2)\mu}.
                          \end{cases}
\end{align*}
 The corresponding social cost is
 \begin{align*}
 \frac{\beta_1\beta_2 \Lambda^2}{(\beta_1+\beta_2)\mu}.
 \end{align*}
 \end{thm}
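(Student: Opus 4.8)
The plan is to exploit the constant-cost property of a mixed equilibrium on each of the three cost regimes identified before the theorem, and then to integrate the resulting workload dynamics. Let $t^{*}$ denote the arrival time of the last drop that still enters service before time $0$, i.e.\ the unique point with $t^{*}+w(t^{*})=0$. The support then splits into $[-T_{e_1},t^{*}]$ (earliness and waiting only), $[t^{*},0]$ (all three costs), and $[0,T_{e_2}]$ (waiting and tardiness). On each piece I substitute the service-entry time $t_2=t+w(t)$ into the cost function $Cost(t_1,t_2)$ of Section~\ref{sec:model} and impose $C'(t)=0$. This yields $w'(t)=\beta_1/\alpha$ on the first interval, $w'(t)=(\beta_1-\beta_2)/(\alpha+\beta_2)$ on the second, and $w'(t)=-\beta_2/(\alpha+\beta_2)$ on the third.

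Next I would convert each slope of $w$ into a value of $f$. The fluid analogue of the dynamics~(\ref{eq:dynamic}) is $w'(t)=(\Lambda f(t)-\mu)/\mu$, valid wherever the server is busy. Since $\Lambda f>\mu$ on the first two intervals and $\Lambda f<\mu$ on the third, the workload rises to a single peak and then falls; hence it is strictly positive on the interior of the support, the server is continuously busy, and the formula is legitimate throughout $(-T_{e_1},T_{e_2})$. Solving $w'(t)=(\Lambda f(t)-\mu)/\mu$ for $f$ on each interval produces exactly the three constants $\tfrac{\alpha+\beta_1}{\alpha}\tfrac{\mu}{\Lambda}$, $\tfrac{\alpha+\beta_1}{\alpha+\beta_2}\tfrac{\mu}{\Lambda}$ and $\tfrac{\alpha}{\alpha+\beta_2}\tfrac{\mu}{\Lambda}$ in the statement.

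It remains to locate the break points and the endpoints. Integrating $w'(t)=\beta_1/\alpha$ from the empty-system condition $w(-T_{e_1})=0$ gives $w(t)=(\beta_1/\alpha)(t+T_{e_1})$ on the first interval, and solving $t^{*}+w(t^{*})=0$ yields $t^{*}=-\tfrac{\beta_1}{\alpha+\beta_1}T_{e_1}$, matching the stated first break point; the second break point is $0$ by definition of the regimes. To pin down $T_{e_1}$ and $T_{e_2}$ I would use two conditions. The first is conservation of volume, $\mu(T_{e_1}+T_{e_2})=\Lambda$, since all fluid is served and the server works throughout the busy cycle. The second is the right-end boundary condition $w(T_{e_2})=0$: if the queue were nonempty at $T_{e_2}$, a drop arriving an instant later would enter service at the same epoch (it merely replaces drained work), thus incurring the same tardiness but strictly less waiting, contradicting equilibrium. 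Because the first drop pays only earliness, the equilibrium cost is $\beta_1 T_{e_1}$, and equating it with the cost of the last (zero-wait) drop gives $\beta_2 T_{e_2}=\beta_1 T_{e_1}$; together with $\mu(T_{e_1}+T_{e_2})=\Lambda$ this returns the stated endpoints. Uniqueness follows since the constant-cost requirement fixes $f$ on each regime and the boundary conditions determine $-T_{e_1}$, $t^{*}$ and $T_{e_2}$ with no remaining freedom. Finally the social cost is $\int_{-T_{e_1}}^{T_{e_2}} C(t)\,\Lambda f(t)\,dt=\beta_1 T_{e_1}\cdot\Lambda$, as $C(t)\equiv\beta_1 T_{e_1}$ and $f$ integrates to one; substituting $T_{e_1}=\beta_2\Lambda/((\beta_1+\beta_2)\mu)$ gives $\beta_1\beta_2\Lambda^2/((\beta_1+\beta_2)\mu)$.

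The main obstacle I anticipate is not the algebra, which is routine once the slopes are known, but the two structural justifications. First, I must verify that the workload stays positive on the whole interior, so that the busy-server dynamics $w'(t)=(\Lambda f(t)-\mu)/\mu$ hold without a reflecting (idling) correction. Second, I must nail down the right boundary condition $w(T_{e_2})=0$ via a no-profitable-deviation argument, rather than borrowing the stochastic identity $f(T_{e_2})=0$ of~(\ref{eq:f(Te)=0}), which fails in the fluid setting precisely because $f$ remains a positive constant right up to $T_{e_2}$.
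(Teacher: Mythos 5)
Your proposal is correct and follows essentially the same route as the paper's proof: the same decomposition into three cost regimes separated by $t^{*}$ (the paper's $t_1$) and $0$, the same constant-cost/zero-derivative argument yielding the three densities, and the same closing conditions — the paper pins down $T_{e_1},t_1,T_{e_2}$ via $\beta_1T_{e_1}=\beta_2T_{e_2}$, the exit-at-zero condition for $t_1$, and normalization of $f$, and your volume-conservation equation $\mu(T_{e_1}+T_{e_2})=\Lambda$ is algebraically equivalent to the paper's normalization once $\beta_1T_{e_1}=\beta_2T_{e_2}$ is imposed (both arguments also implicitly rely on $w(T_{e_2})=0$, which you, unlike the paper, justify explicitly).

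One small slip in your busy-server justification: the claim that $\Lambda f>\mu$ on \emph{both} of the first two intervals is false when $\beta_1<\beta_2$, since then $\frac{\alpha+\beta_1}{\alpha+\beta_2}\mu<\mu$ and the workload is already decreasing on $[t^{*},0]$. This does not damage the conclusion: $w$ is piecewise linear, strictly increasing on $[-T_{e_1},t^{*}]$, and takes the positive values $-t^{*}$ and $\frac{\beta_1T_{e_1}}{\alpha+\beta_2}$ at the breakpoints, vanishing only at $T_{e_2}$; hence $w>0$ on the interior and the dynamics $w'(t)=(\Lambda f(t)-\mu)/\mu$ remain valid throughout, exactly as you need.
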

\begin{proof}
Since we seek for a symmetric Nash equilibrium, the cost function is
constant along the support of the arrival distribution, and hence the derivative is zero. In the first part,
only earliness and waiting costs are considered until a point in
time at which arrivals are being served after time $0$. In the
second part, all three costs are taken into account. This means that
a drop that arrives at the queue at $t<0$ will exit at $t>0$. The
third part contains both waiting and tardiness costs. In summary, we
have three different parts with three cost functions, and hence
three different arrival densities. Let $[-T_{e_1},t_1]$ be the first
part's arrival density period, where $t_1<0$ is the time point after
which arrivals will pay a tardiness cost. The cost function of this
part is\deleted{:} $$ C_1 (t)=\frac{\Lambda
F(t)-\mu(t+T_{e_1})}{\mu}\alpha-\beta_1 t, \quad t\in [-T_{e_1},
t_1].$$ After taking the derivative and comparing to zero we will
get the result as stated above. We can reason in the same spirit
about the second and third parts, where the cost functions are
$$C_2(t)=\frac{\Lambda F(t)-\mu(t+T_{e_1})}{\mu}\alpha-\beta_1
t+\beta_2\left(\frac{\Lambda F(t)-\mu(t+T_{e_1})}{\mu}+t\right),
\quad t\in [t_1,0]. $$ and $$C_3(t)=\beta_2
t+(\beta_2+\alpha)\left(\frac{\Lambda
F(t)-\mu(t+T_{e_1})}{\mu}\right), \quad t\in [0, T_{e_2}] $$
respectively. The values of $T_{e_1}$, $t_1$ and $T_{e_2}$ are the
solution of the following set of equations: $
\beta_1T_{e_1}=\beta_2T_{e_2}$, $\frac{\Lambda
F(t_1)-\mu(t_1+T_{e_1})}{\mu}=-t_1$ and
$(t_1+T_{e_1})(\frac{\alpha+\beta_1}{\alpha}\frac{\mu}{\Lambda})+\frac{T_{e_1}\beta_1}{\alpha+\beta_1}
(\frac{\alpha+\beta_1}{\alpha+\beta_2}\frac{\mu}{\Lambda})+T_{e_2}\frac{\alpha}{\alpha+\beta_2}\frac{\mu}{\Lambda}=1$.
The first equation arises due to the constant cost for all arriving
drops. Therefore, the first drop to arrive and the last will have
the same cost. The second equation shows that a drop arriving at
$t_1$ will exit the system at time 0. Finally, the third equation
sums the pdf to 1. The corresponding social cost is simply $\Lambda$
times the overall cost of each drop.
\end{proof}
We next present the socially optimal strategy and its overall cost.
\begin{thm}
The unique socially optimal strategy has a uniform density of $\frac{\mu}{\Lambda}$ along $[-\frac{\beta_2\Lambda}{(\beta_1+\beta_2)\mu},\frac{\beta_1 \Lambda}{(\beta_1+\beta_2)\mu}]$. The corresponding social cost is
\begin{align}
\frac{\beta_1\beta_2
\Lambda^2}{(\beta_1+\beta_2)2\mu}.\label{eq:noconsoptiamlsocialcost}
\end{align}
\end{thm}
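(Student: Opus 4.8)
The plan is to first reduce the problem to schedules that create no waiting at all, and then to solve an elementary placement problem. The guiding observation is that the target density $\mu/\Lambda$ is precisely the level at which the arrival volume rate $\Lambda f(t)$ equals the service rate $\mu$, so a uniform density of $\mu/\Lambda$ is exactly the regime in which no queue ever builds; waiting is then pure deadweight loss that an optimal planner will avoid.

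First I would show that the socially optimal schedule has zero waiting. Starting from an arbitrary feasible arrival schedule, write $A(t)=\Lambda F(t)$ for the cumulative arriving volume and let $D(t)$ be the induced cumulative departure (service) process, which rises at rate at most $\mu$ and satisfies $D(t)\le A(t)$. I would construct the alternative schedule in which the drop at cumulative position $v$ arrives at its original service time $D^{-1}(v)$ rather than at $A^{-1}(v)$, i.e. the new cumulative arrival process is $\tilde A=D$. Since $D$ increases at rate at most $\mu$, this new arrival stream never exceeds service capacity, so it is self-consistent: the system builds no queue and each drop is served immediately upon its (new) arrival, whence the service time of every drop, and thus its tardiness, is unchanged. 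Because $A^{-1}(v)\le D^{-1}(v)$, every drop's arrival is moved weakly later, so its earliness weakly decreases while its waiting drops to zero. Hence total cost weakly decreases and we may restrict to zero-waiting schedules. \emph{This reduction is the main obstacle}, as the delicate points are the self-consistency of $\tilde A=D$ and the simultaneous preservation of all service times together with the monotone decrease of earliness.

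Second, under zero waiting a drop arriving at time $t$ incurs the V-shaped cost $c(t)=-\beta_1 t$ for $t<0$ and $c(t)=\beta_2 t$ for $t\ge 0$ (no waiting, earliness only for $t<0$, tardiness only for $t>0$), while the no-queue requirement forces $f(t)\le \mu/\Lambda$ pointwise. The social cost is then $\Lambda\int c(t)f(t)\,dt$ to be minimized subject to $f\le \mu/\Lambda$ and $\int f=1$. Since $c$ is strictly decreasing on $(-\infty,0)$ and strictly increasing on $(0,\infty)$, this is a water-filling problem whose unique minimizer sets $f=\mu/\Lambda$ on the sublevel set $\{t:c(t)\le\theta\}=[-\theta/\beta_1,\theta/\beta_2]$, with $\theta$ determined by $\int f=1$. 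Solving
\[
\frac{\mu}{\Lambda}\Bigl(\frac{\theta}{\beta_1}+\frac{\theta}{\beta_2}\Bigr)=1
\]
yields $\theta=\frac{\beta_1\beta_2\Lambda}{(\beta_1+\beta_2)\mu}$, which gives precisely the claimed endpoints $-\frac{\beta_2\Lambda}{(\beta_1+\beta_2)\mu}$ and $\frac{\beta_1\Lambda}{(\beta_1+\beta_2)\mu}$, the uniform density $\mu/\Lambda$, and uniqueness from the strict monotonicity of $c$.

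Finally I would substitute back. Writing $x=\frac{\beta_2\Lambda}{(\beta_1+\beta_2)\mu}$ and $y=\frac{\beta_1\Lambda}{(\beta_1+\beta_2)\mu}$, the social cost is $\Lambda\int c f=\mu\int_{-x}^{y} c(t)\,dt=\frac{\mu}{2}\bigl(\beta_1 x^2+\beta_2 y^2\bigr)$, and a routine simplification reduces this to $\frac{\beta_1\beta_2\Lambda^2}{2(\beta_1+\beta_2)\mu}$, matching \eqref{eq:noconsoptiamlsocialcost}. Everything after the no-waiting reduction is a routine convex computation, so the essential work lies in justifying that reduction.
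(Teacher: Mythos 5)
Your proof is correct, and its skeleton matches the paper's: eliminate waiting, then trade off earliness against tardiness over an interval straddling time $0$. The difference is in what gets proved versus asserted. The paper's proof simply states that ``the socially optimal arrival rate is uniform with rate $\mu$ in order to avoid waiting cost,'' restricts attention a priori to densities equal to $\mu/\Lambda$ on an interval $[-LB,UB]$ with $(LB+UB)\mu=\Lambda$, and then minimizes $-\int_{-LB}^{0}\beta_1\mu t\,dt+\int_{0}^{UB}\beta_2\mu t\,dt$ over the two endpoints. You supply the two facts this takes for granted: (i) the no-waiting reduction, via the rescheduling $\tilde A=D$, which preserves every drop's service epoch (hence tardiness), weakly lowers earliness, and removes waiting --- and it strictly lowers cost whenever the original schedule has positive waiting, which is what lets you claim uniqueness over the full strategy space rather than within a pre-restricted family; and (ii) that among zero-waiting schedules (equivalently $f\le\mu/\Lambda$ pointwise) the optimum saturates the capacity bound exactly on a sublevel set of the V-shaped cost $c$, via the water-filling argument, rather than assuming from the outset that the support is a single interval containing $0$ with density exactly $\mu/\Lambda$. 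What the paper's version buys is brevity; what yours buys is a genuine optimality proof against all feasible arrival strategies (including ones that queue, or spread mass below capacity) and a clean uniqueness argument. Your endpoint and cost computations agree with the paper's, so the proposal stands as a strictly more complete version of the same approach.
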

\begin{proof}
The socially optimal arrival rate is uniform with rate $\mu$ in order to avoid waiting cost. The values of the lower and upper bounds of the arrival (which we denote by $L\!B$ and $U\!B$ respectively) are the ones that minimize the social welfare cost function, where clearly\\ $L\!B,U\!B \geq 0$:
  \begin{align}
  -\int_{-L\!B}^{0}\beta_1\mu tdt+\int_{0}^{U\!B}\beta_2\mu tdt. \label{eq:optiamlcost}
  \end{align}
In addition, in order to ensure that the total volume is $\Lambda$, the condition $(L\!B+U\!B)\mu=\Lambda$ should be met. Now obtaining the values of $L\!B$ and $U\!B$ is straightforward. Finally, the social welfare results directly from~(\ref{eq:optiamlcost}).
\end{proof}
\noindent The PoA is a direct result of dividing~(\ref{eq:noconsoptiamlsocialcost}) by~(\ref{eq:optiamlcost}). In the unconstrained model it equals exactly 2.
\begin{remark}
In the arrival density at equilibrium the first step is larger than the second step, and the third is smaller than the second.
\end{remark}
\begin{figure}[h]
    \begin{center}
        \includegraphics[height=8cm, width=13cm]{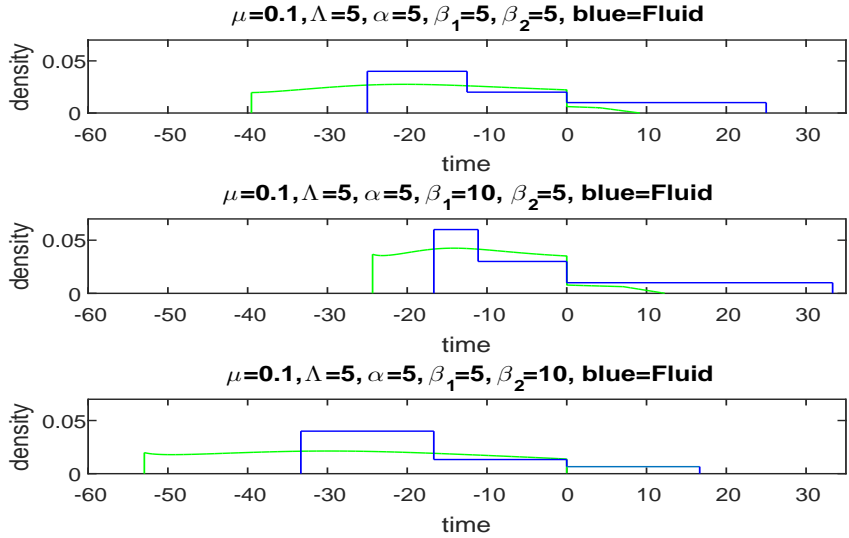}\caption{ Numerical example: the equilibrium arrival distribution in the unconstrained model, for three different set values of $\beta_1$, $\beta_2$ and $\alpha$.}\label{fig:firstgrpagh2}
    \end{center}
\end{figure}
\section{The case with opening and closing time constraints}\label{sec:poisoncons}
In this section we present our analysis of the constrained models, i.e., where $T_1<\infty$ and $T_2<\infty$, for both the stochastic and the fluid models.
\subsection{Stochastic model}
In this subsection we study the effect of arrival time constraints on the equilibrium arrival strategy.  Due to the complexity of the case where both of the constraints are effecting customers behaviour, we first discuss the case where only the closing time is constrained (i.e., $T_1=\infty$, $T_2<\infty$), then the case where only the opening time is constrained (i.e., $T_1<\infty$, $T_2=\infty$). Finally, consider both of them simultaneously.
\subsubsection{Constrained closing time:}

  While studying the effect of constraining the closing time, we first observe that if the unconstrained equilibrium meets the constraint (i.e., $T_2>T_{e_2}$), then the constraint has no effect on the customers' behaviour. Thus, we deal here only with the opposite case, $T_2<T_{e_2}$. As before, the equilibrium arrival strategy is a mixed strategy along an interval $(-T'_{e_1},T_2)$
  with a complete density $f(t)$ (that is, without any atoms). The lower bound of the support of $f(\cdot)$, $T'_{e_1}$, is a part of the solution and needs to be found. We follow the same line of analysis as in Theorem~\ref{thm:f(t)unique}, but with two adjustments. The first one is that $T_{e_2}$ from Eq.~(\ref{eq:f(t)=1)}) is replaced by $T_2$ and the second is that Eq.~(\ref{eq:f(Te)=0}) no longer holds. Actually, it is no longer in use because the closing time is not exogenous. Notice that if $T_1$ is finite but not effective (i.e., $T'_{e_1}<T_1$), then our results remain the same. We give further details and prove our results in Theorem~\ref{thm:constraints}.
\subsubsection{Constrained opening time:}
While studying the effect of constraining the opening time, we first observe, just like the case above, that if the unconstrained equilibrium meets the constraint (i.e., $T_1>T_{e_1}$), then the constraint has no effect on the customers' behaviour. Thus, we deal here only with the opposite case, $T_1<T_{e_1}$. The structure of the equilibrium distribution in this case is significantly different to those we have seen before. The difference is that a complete density starting at time $-T_1$ cannot be an equilibrium arrival strategy. Instead, the structure of the equilibrium is either one in which all customers arrive at time $-T_1$, or else there is an atom of size $p<1$ at time $-T_1$, followed by a gap, and then a positive density.
For further analysis, we give the following definitions. Let $N_p$ be a random variable that follows a Poisson distribution with parameter $\l p$. Let $g_1(t,p)$ be the expected overall cost of arriving at time $t>-T_1$, given that $N_p$ arrives at $-T_1$ and no-one else arrives in the interval $(-T_1,t)$. Let $g_2(p)$, be the expected overall cost of arriving at $-T_1$ among $N_p$ more customers.
If everyone arrives at $-T_1$, arriving a moment later is clearly not beneficial. Moreover, arriving at any time $t<0$ is suboptimal, since the longer a customer waits, approaching time 0, his expected earliness and waiting costs both reduce, while his tardiness cost remains the same. Therefore, the best response to everyone else arriving at $-T_1$ is either arriving at $-T_1$ as well, or at some $t\geq 0$. Let $t^*$ be the best response, besides $-T_1$; then $t^* = \arg\min_{t\geq 0}\thinspace g_1(t,1)$. If $g_1(t^*,1)>g_2(1)$ then there is a pure equilibrium at time $-T_1$. That is, even the optimal arrival point besides arriving with everyone else at $-T_1$ is more expensive than simply arriving at $-T_1$ as well. However, if $g_1(t^*,1)<g_2(1)$, the equilibrium arrival strategy is mixed, and incorporates an atom at time $-T_1$, followed by a gap, and then a positive density. Now, if arriving at $t^*$ is better than arriving with everyone else at $-T_1$ it can no longer hold for symmetric Nash equilibrium. Consequently, an atom which is smaller than 1 is created at $-T_1$, and only after a while customers will arrive again. That is, there exists an atom size $p'$ such that $g_1(t,p')=g_2(p')$ for some $t>0$. We denote the upper bound of the arrival support as $T'_{e_2}$ and distinguish it from the notation of the upper bound when there was no effective opening time constraint $T_{e_2}$. We give further details and prove our results in Theorem~\ref{thm:constraints}.
\subsubsection{Both opening and closing times are constrained:}
To this end, we have discussed cases in which only one of the time
constraints is effective. We next address the cases where both of
them are. We split into two cases in order to simplify. First, let
us consider the case where $T_2<T_{e_2}$ and $T_1<T'_{e_1}$. One can
look at this scenario as if only the closing time is constrained
(while ignoring the opening time constraint), and when computing the
arrival strategy it resulted in a new initial arrival point
$T'_{e_1}$. However, if $T_1<T'_{e_1}$ the solution in no longer
valid. Clearly, the equilibrium arrival strategy can no longer
involve a complete density at time $-T'_{e_1}$. In fact, an atom at
time $-T_1$ is guaranteed. Furthermore, a pure equilibrium strategy
is possible. Now, let us explore a second case in which
$T_1<T_{e_1}$ and we relate to $T_2$ later. That is, we ignore the
closing time constraint for the time being. If $g_1(t^*,1)>g_2(1)$,
then a pure strategy is formed regardless to $T_2$. If
$g_1(t^*,1)<g_2(1)$, we define the following: Let $T'$ be the
smallest $t$ that satisfies $g_1(t,1)=g_2(1)$. Suppose now that
$T'>T_2$, this means that a pure strategy is formed again (where
now, $T_2$ is effective). But, if $T'<T_2$, then, this means that
there is an arrival time that is better than arriving at $-T_1$ with
everyone else. In this case, customers will start arriving from $T'$
until $T'_{e_2}$ if there was no effective closing time constrained.
However, if $T_2<T'_{e_2}$, then there will be an atom which is
smaller than 1 at $-T_1$ and then customers arrive from $T'$ until
$T_2$. We summarize our results for all of the various cases in the
following theorem. We present them by dividing the possible cases
into three different types of equilibrium arrival strategies: 1)
pure, 2) mixed with an atom, and 3) mixed without atoms.
\begin{thm}\label{thm:constraints}
One of the following exhaustive and mutually exclusive
cases occurs.\\
 Case 1:  The equilibrium strategy is pure and it prescribes arriving at $-T_1$ if $g_2(1)<g_1(t^*,1)$ or $T_2<T'$ and $g_1(t^*,1)<g_2(1)$.\\
Case 2: A mixed equilibrium which incorporates an atom with size $p'$ at $t\!=\!-T_1$, then an interval $(-T_1,t')$ with zero density, followed by a positive density from $t=t'$ until $(T'_{e_2}\wedge T_2)$. This will be formed if one of the following two conditions occurs:\\
 \textbf{\romannum{1 }}.\ $T_{e_1}<T_1<T'_{e_1}$ and  $T'<T_2<T_{e_2}$, or \\
  \textbf{\romannum{2 }}.\ $T_1<T_{e_1}$ and $T_2>T'$. The following conditions need to be obeyed.
 \begin{align}
 g_1(t',p')=g_2(p'). \label{eq:g1(t',p)}
 \end{align}
  \begin{align}
   \int_{t=t'}^{(T'_{e_2}\wedge T_2)}f(t)dt=1-p'. \label{eq:f(t)=1-p}
 \end{align}
 \begin{align}
f(t)= \frac{(1-p_0(t))\mu}{\l}-\frac{\beta_2
\mu}{(\alpha+\beta_2)\l}, \quad t' \leq t \leq (T'_{e_2}\wedge
T_2).\label{eq:ft1}
\end{align}
 The initial conditions at time $t'$ are
  \begin{align}
  p_k(t')=\sum_{n=k}^{\infty}\frac{e^{-\l p} (\l p)^n}{n!}\frac{e^{-\mu(t'+T_1)}(\mu(t'+T_1)^{n-k})}{(n-k!)},\quad k\geq 1.\label{eq:initialpois}
  \end{align}
 \begin{align}
 p_0(t')=1-\sum_{k=1}^{\infty}p_k(t') . \label{eq:initialcons0}
  \end{align}
 and
\begin{align}
f(t')= \frac{(1-p_0(t'))\mu}{\l}-\frac{\beta_2
\mu}{(\alpha+\beta_2)\l}\geq 0, \quad t' \leq t \leq (T'_{e_2}\wedge
T_2).\label{eq:ft1initial}
\end{align}
Finally, if $T'_{e_2}<T_2$ then
\begin{align}
f(T'_{e_2})=0. \label{eq:f(T'e_2=0)}
\end{align}
Case 3: A mixed equilibrium strategy without atoms, and with a discontinuity point at time 0. This will be formed if one of the following occurs:\\
 \textbf{\romannum{1 }}.\ $T_1>T_{e_1}$ and $T_2>T_{e_2}$, or \\
 \textbf{\romannum{2 }}.\ $T'_{e_1}<T_1$ and $T_2<T_{e_2}$.\\ \textbf{\romannum{1 }} is equivalent to the case in Theorem~\ref{thm:f(t)unique}. \textbf{\romannum{2 }} is equivalent to the case in Theorem~\ref{thm:f(t)unique}, with two exceptions. The first one is the support of $f(t)$, which is now along $(-T'_{e_1},T_2]$. The second is that condition~(\ref{eq:f(Te)=0}) is no longer satisfied.
\end{thm}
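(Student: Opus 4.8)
The plan is to read the theorem as an assembly of the case analysis that precedes it, organised around two binary questions: whether the opening-time constraint binds and whether the closing-time constraint binds. I would first settle exhaustiveness and mutual exclusivity. The opening constraint is ineffective exactly when $T_1$ exceeds the lower bound that the relevant smooth equilibrium would generate, namely $T_{e_1}$ when $T_2>T_{e_2}$ and the recomputed $T'_{e_1}$ when $T_2<T_{e_2}$. Pairing this dichotomy with the sign of $g_2(1)-g_1(t^*,1)$ and the comparison of $T'$ with $T_2$ carves the parameter region into precisely the listed conditions; since all of them are strict inequalities among the same finite set of thresholds, the three cases are jointly exhaustive and pairwise disjoint.

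For Case~3 I would invoke Theorem~\ref{thm:f(t)unique} directly. Under subcase~\textbf{\romannum{1}} neither constraint binds, so the unconstrained equilibrium is feasible and the earlier existence and uniqueness arguments apply verbatim. Under subcase~\textbf{\romannum{2}} only the closing constraint binds; following the adaptation announced for the constrained-closing-time analysis, I would replace $T_{e_2}$ by $T_2$ in~(\ref{eq:f(t)=1)}) and drop the free-boundary condition~(\ref{eq:f(Te)=0}), which is unavailable once the closing time is exogenous. The constant-cost property along the support together with the gap-elimination argument of Lemma~\ref{lem:continoius density} then certifies that the truncated density on $(-T'_{e_1},T_2]$ is an equilibrium, and the coupling argument of Lemma~\ref{lem:coupling} delivers uniqueness as before.

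The conceptual core is Cases~1 and~2, where the opening constraint binds. Here I would first prove the structural obstruction flagged in the constrained-opening-time discussion: an atomless density opening at $-T_1$ cannot be an equilibrium when $T_1<T_{e_1}$ (respectively $T_1<T'_{e_1}$). The idea is that the indifference ODE together with the empty-system initial condition at $-T_1$ determines the forward solution uniquely, and being forced to open strictly later than the natural start leaves this solution unable to accommodate the full unit mass, so the deficit must be concentrated as an atom at $-T_1$. Given an atom of size $p$, I would then compare the two candidate best responses: joining the atom, at cost $g_2(p)$, versus waiting out the induced gap and arriving at the cheapest later instant, at cost $\min_{t\geq 0}g_1(t,p)$. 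If $p=1$ makes joining weakly optimal, i.e.\ $g_2(1)\le g_1(t^*,1)$, or if the closing time forecloses every cheaper alternative, i.e.\ $T_2<T'$, then the pure strategy of Case~1 is an equilibrium. Otherwise joining is strictly worse than deviating, so $p=1$ is untenable, and I would invoke a monotonicity and intermediate-value argument on $g_1(t',p)-g_2(p)$ in $p$ to produce a unique atom size $p'\in(0,1)$ and gap endpoint $t'\ge 0$ satisfying the indifference relation~(\ref{eq:g1(t',p)}).

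Finally, with $p'$ and $t'$ in hand I would record the remaining equations of Case~2. Because the post-gap region lies entirely in $t\ge 0$, the indifference condition~(\ref{eq:derivative2=0}) and the dynamics~(\ref{eq:dynamic}) reproduce the density~(\ref{eq:ft}) unchanged, giving~(\ref{eq:ft1}); the normalization becomes~(\ref{eq:f(t)=1-p}) since mass $p'$ is already placed, and $f(T'_{e_2})=0$ in~(\ref{eq:f(T'e_2=0)}) persists whenever the closing time does not bind. The initial law~(\ref{eq:initialpois})--(\ref{eq:initialcons0}) follows from a short transient computation: conditioning on $N_p$ customers present at $-T_1$, during the empty gap the server runs continuously while busy, so for $k\ge 1$ the event of $k$ in system requires $n\ge k$ initial arrivals and exactly $n-k$ completions, the latter Poisson with mean $\mu(t'+T_1)$ since the queue stays nonempty throughout, with $p_0(t')$ absorbing the complementary mass. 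The main obstacle I anticipate is the structural step ruling out an atomless opening density, together with the monotonicity needed to pin down $p'$ uniquely; the equation-collecting steps and the transient initial law are routine by comparison.
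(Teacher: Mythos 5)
Your overall decomposition matches the paper's proof: Case~3 is dispatched by Theorem~\ref{thm:f(t)unique} (with $T_{e_2}$ replaced by $T_2$ and condition~(\ref{eq:f(Te)=0}) dropped in subcase \textbf{\romannum{2}}), Case~1 by comparing $g_2(1)$ with $g_1(t^*,1)$ and noting that $T_2<T'$ forecloses all profitable deviations, and Case~2 by verifying the listed equations; your transient computation justifying~(\ref{eq:initialpois})--(\ref{eq:initialcons0}) is correct and is exactly what the paper waves through as ``simple probability considerations.'' The genuine gap sits at the step you yourself flag as the main obstacle: ruling out an atomless density starting at $-T_1$. You propose a mass-accounting argument (the forward solution of the indifference ODE started at $-T_1$ cannot absorb the full unit mass, so the deficit must sit in an atom), but this is precisely the kind of comparison-of-trajectories claim that needs the coupling machinery of Lemma~\ref{lem:coupling}, and you never supply it. The paper's argument is shorter and purely cost-based: if a complete density started at $-T_1$, the first arrival would pay only earliness, so the constant equilibrium cost would be $\beta_1 T_1$; a complete-density equilibrium with that cost is exactly the construction of Theorem~\ref{thm:f(t)unique} (or its closing-constrained variant), whose starting point is unique, so this would force $T_1=T_{e_1}$ (under condition \textbf{\romannum{2}}) or $T_1=T'_{e_1}$ (under condition \textbf{\romannum{1}}), contradicting the strict inequalities defining Case~2. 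Your route is not wrong in spirit, but as written it substitutes an unproven monotonicity assertion for a two-line argument.

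A second, more factual error: you justify~(\ref{eq:ft1}) by claiming the post-gap support lies entirely in $t\ge 0$. That is false in general. The dominance argument (``waiting out the gap reduces earliness and waiting while tardiness stays constant'') pins down $t^*\ge 0$ only against an atom of size $1$; once $p'<1$ and a density follows, the gap can end at a negative time. Indeed the paper's own numerical example (Figure~\ref{fig:atomdiffT1}, green graph) has the gap ending at $t'=-4.15$ with arrivals throughout $[-4.15,0)$ paying earliness cost, and the $g_1(t,p)$ used in the paper's numerical procedure carries an explicit $t<0$ branch for exactly this reason. When $t'<0$, the density on $[t',0)$ obeys the implicit relation extracted from~(\ref{eq:derivative1=0}), not the closed form~(\ref{eq:ft1}); the theorem's wording of~(\ref{eq:ft1}) glosses over this, but a proof cannot rest on the premise that $t'\ge 0$ always holds.
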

\begin{proof}
Case 1: The condition $g_1(t^*,1) > g_2(1)$ implies that the cost associated with the best response to every other player arriving at time $-T_1$ is worse than just arriving at $-T_1$. $T_2<T'$, under the assumption that $g_1(t^*,1)<g_2(1)$, implies that all arrival times that have lower overall cost than arriving with everyone else at time $-T_1$ are no longer possible within the system constraints.\\
Case 2: Since $T_2>T'$ and $g_1(t^*,1)<g_2(1)$, there is no pure equilibrium. In addition, a density that starts at time $-T_1$ cannot be part of the equilibrium. The reason for that is as follows. If there were a density starting a time $-T_1$ at equilibrium, then a customer who arrives at time $-T_1$ suffers only the earliness cost, and hence his total cost is $\beta T_1$. As, under mixed equilibrium, the customers are indifferent about when to arrive, this is the total cost of
everyone else. It is equivalent to either $T'_{e_1}=T_1$ (under condition\emph{ \textbf{\romannum{1 }}}) or to $T_{e_1}=T_1$ (under condition \emph{\textbf{\romannum{2 }}}). Note that neither of the latter two conditions is the case. In addition, once an atom is created at $-T_1$, arriving a moment later is strictly suboptimal and hence a gap in the
interval $[-T_1,t']$ is created. Eq.~(\ref{eq:g1(t',p)}) is obeyed, due to the fact that the expected total cost is constant along the support. Eqs.~(\ref{eq:f(t)=1-p}) and~(\ref{eq:ft1}) are obeyed by the same consideration of Eqs.~(\ref{eq:f(t)=1)}) and~(\ref{eq:ft}). Eqs.~(\ref{eq:initialpois}) and~(\ref{eq:initialcons0}) follow from simple probability considerations. Eq.~(\ref{eq:ft1initial}) implies that customers are starting to arrive for the first time after time $-T_1$ at $t'$, which is clearly the case. Finally, we assert that Eq.~(\ref{eq:f(T'e_2=0)}) holds, by using the same argument starting from~(\ref{eq:f(Te)=0}).\\
Case 3: Condition \emph{\textbf{\romannum{1 }}} is equivalent to Theorem~\ref{thm:f(t)unique}, since the opening and closing time constraints are not effective. Condition \emph{\textbf{\romannum{2 }} }suggests that only the closing time constraint is effective. This means that no atoms can be formed as argued above in subsection~\ref{sec:poisonnoconssub}.
\end{proof}
We next provide the steps needed for numerical computation of the mixed equilibrium in the constrained stochastic model.
\begin{remark}
   If the equilibrium arrival strategy is mixed with an atom, we make a guess for $p'$, then compute $t'$ such that $g_1(t',p')=g_2(p')$, where
   \begin{align*}
   g_1(t,p) =  \begin{cases}
              -\beta_1 t+\alpha \mathbb{E}\left(\left[\sum\limits_{j=1}^{N_p} X_j\!-\!T_1\!-\!t\right]^+\right) +\beta_2 \mathbb{E}\left(\left[\sum\limits_{j=1}^{N_p} X_j-T_1\right]^+\right),\quad  &  t<0,\\
             \beta_2 t+(\alpha+\beta_2)\mathbb{E}\left(\left[\sum\limits_{j=1}^{N_p} X_j-T_1-t\right]^+\right),\quad  &   t>0. \end{cases}
\end{align*}
\begin{align*}
g_2(p)=\beta_1T_1+\alpha\frac{\l
p}{2\m}+\beta_2\left(\sum\limits_{i=1}^\infty \frac{e^{-\l p} (\l
p)^i}{i!}\sum\limits_{k=1}^{i}\frac{1}{i}\mathbb{E}\left[\left(\sum\limits_{j=1}^{k}X_j-T_1\right)^+\right]\right).
\end{align*}
   Given $t'$, we compute the initial conditions by~(\ref{eq:initialpois}) and~(\ref{eq:initialcons0}). After obtaining $t'$ and the initial conditions at time $t'$, we use a similar procedure to the one described in Remark~\ref{remark:numerical1}. We first compute $T'_{e_2}$ and ignore for the time being the fact that it may be larger than $T_2$. This is done by computing $f(t)$ and $p_k(t)$ for any $k\in \{0,1,...,N_{\max}\}$ from $t'$ until one of the conditions~(\ref{eq:f(t)=1-p}) or~(\ref{eq:f(T'e_2=0)}) is met. The probabilities $p_0(t), p_1(t),...,p_{N_{\max}(t)}$ are computed using
   \begin{align}
p_0'(t)=p_1(t)\mu -p_0(t)\l f(t), \quad      \quad t' \leq t \leq
(T'_{e_2}\wedge T_2).  \label{eq:differen01}
\end{align}
 \begin{align}
p_k'(t)\!=\!-p_k(t)(\mu\!+\!\l f(t))\!+\! p_{k-1}(t)\l f(t)\!+\!
p_{k+1}(t)\mu,   \quad t' \leq t \leq (T'_{e_2}\wedge T_2).
\label{eq:differenk1}
\end{align}
      If the stopping criterion was due to condition~(\ref{eq:f(t)=1-p}), then we guess a smaller value of $p'$; otherwise we guess a larger one. This goes on until conditions~(\ref{eq:f(t)=1-p}) and~(\ref{eq:f(T'e_2=0)}) are met simultaneously. After obtaining $T'_{e_2}$, we can check if $T'_{e_2}<T_2$ or not. If $T'_{e_2}<T_2$, then we finished. If $T'_{e_2}>T_2$, we again make a guess for $p'$, although now we compute $f(t)$ until time $T_2$, and we repeat this until condition~(\ref{eq:f(t)=1-p}) is met at $T_2$. Finally, if the equilibrium arrival strategy is mixed without atoms, we repeat the same procedure as in Remark~\ref{remark:numerical1}, with $T_{e_2}$ replaced by a fixed $T_2$. As in the previous section, this is achieved by allowing an error of $\epsilon <10^{-2}$ (see Figs~\ref{fig:atomdiffT1} and~\ref{fig:atomdiffmu}).
\end{remark}
\begin{figure}[h]
    \begin{center}
        \includegraphics[height=7cm, width=15cm]{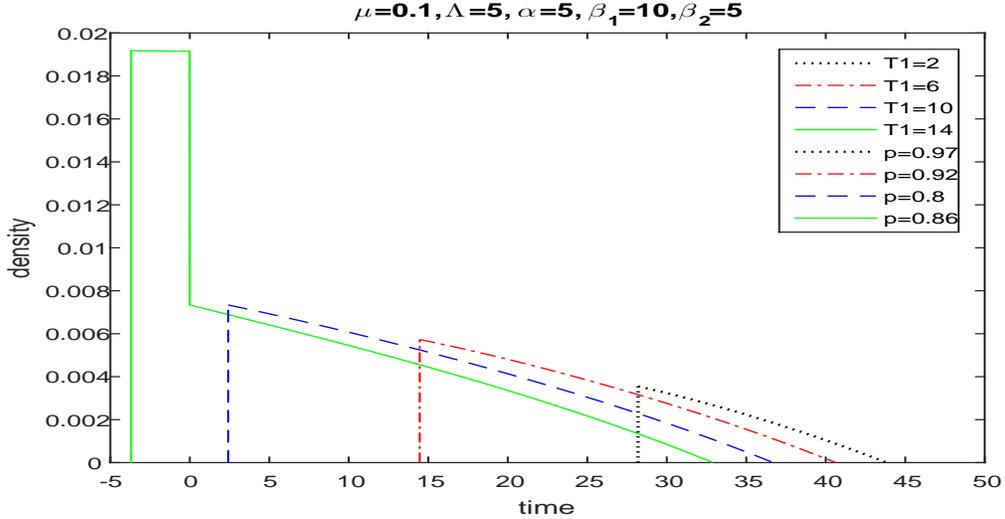}\caption{ Numerical example: the equilibrium arrival distribution in the constrained model for different values of $T_1$. The equilibrium arrival strategy under the green graph, has an atom at $t=-14$ of size $p=0.86$. The gap is until $t=-4.15$. Arrival in the interval $[-4.15,0)$ comes with an earliness cost (among possible waiting and tardiness), but by arriving within the interval $[0,33.2]$, one avoids cost. In the blue, red and black graphs, earliness cost are only for those who arrived in the atom at $-T_1$, whereas, after the gap, customers arrive after $t=0$. }\label{fig:atomdiffT1}
    \end{center}
\end{figure}

\begin{figure}[h]
    \begin{center}
        \includegraphics[height=7cm, width=15cm]{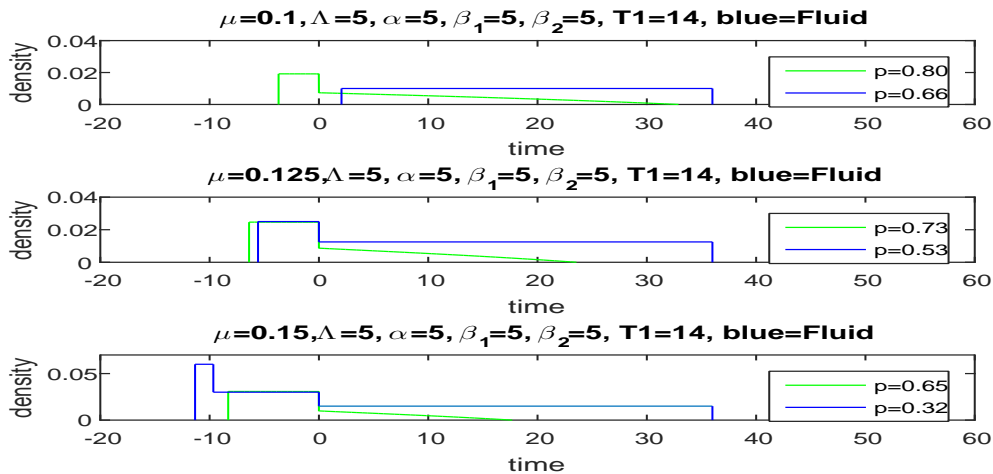}\caption{ Numerical example: the equilibrium arrival distribution in the constrained model for different values of $\mu$. The three grpahs illustrates all types of possible equilibria. For example, in the lowest graph (i.e. $\mu=0.15$), there is an atom at $-14$ of size $p=0.65$ and $p=0.32$ under the stochastic and fluidic model, respectively. Under the stochatic model (the green graph), there are two different probability densities (arriving prior and after 0) for those who arrive after the atom. Under the fluidic model, there are three different probability densities: (1) $[-11.5,-10.1]$, customers have earliness and waiting costs. (2) $(-10.1,0]$, customers have all cost types. (3) $(0,36.9]$, customers have waiting and tardiness costs.     }\label{fig:atomdiffmu}
    \end{center}
\end{figure}

\subsection{Fluid model}
As in the previous section, the total amount of time the server is needed in the fluid model is $\frac{\Lambda}{\mu}$. Under the opening and closing time constraints, the total amount of time during which customers are allowed to arrive is $T_1+T_2$. If it is larger than the total time needed, i.e., $\frac{\Lambda}{\mu}<T_1+T_2$, then at most one of the two constraints is tight. Specifically, if $T_1<T_{e_1}$ then customers are starting to arrive from time $-T_1$ until some $t<T_2$ (possibly pure at time $-T_1$). If $T_2<T_{e_2}$ then customers are starting to arrive from some $t>-T_1$ until time $T_2$. We deal with both cases separately in Theorems~\ref{thm:fluidT1} and~\ref{thm:fluidT2}, respectively. Of course, if both $T_1<T_{e_1}$ and $T_2<T_{e_2}$, then the situation coincides with our results from Theorem~\ref{thm:fluidnocons}. However, if the total amount of time customers are allowed to arrive is smaller than the total time needed to serve them (i.e., $\!T_1\!+\!T_2<\frac{\Lambda}{\mu}$) then things are more involved. This is because both the opening and closing times of the system need to be considered while obtaining the equilibrium arrival strategy. We deal with this case in Theorem~\ref{thm:fluidT1T2}. For each one of the cases above we specify the corresponding socially optimal strategy.
Before presenting the equilibrium arrival strategy, we define several points in time and probabilities that play a role in the specific structure of the equilibrium.
\begin{align*}
&A_1=\frac{(-\beta_1\!+\!\sqrt{\beta_1^2-\alpha\beta_2\!+\!\beta_2^2})\Lambda} {\beta_2\mu}, A_2=\frac{-\beta_1\beta_2\Lambda+\beta_2\Lambda\sqrt{(\alpha\!+\!\beta_1)^2\!+\!\alpha\beta_2\!+\!\beta_2^2}}{(\alpha^2\!+\!\beta_2^2\!+\!\alpha(2\beta_1\!+\!\beta_2))\mu},\\
&A_3=\frac{2\beta_2\Lambda}{(\alpha+2\beta_1+2\beta_2)\mu},\quad A_4=\frac{\beta_2 \Lambda}{(\beta_1+\beta_2)\mu},\\
&p_1=\frac{\beta_2\Lambda^2-T_1\beta_1\mu+\frac{1}{2}\sqrt{-4T_1^2\beta_2(\alpha+\beta_2)\Lambda^2\mu^2+(2\beta_2\Lambda^2-2T_1\beta_1\Lambda\mu)^2}}{(\alpha+\beta_2)\Lambda^2}\\
&p_2=\frac{2(\beta_2\Lambda-T_1\mu(\beta_1+\beta_2))}{\alpha \Lambda},\\
&t_1=\frac{-T_1(\alpha+\beta_1)\Lambda\mu+\sqrt{\Lambda^2(\beta_2\Lambda^2-2T_1\beta_1\beta_2\Lambda\mu+T_1^2(\beta_1^2-\beta_2(\alpha+\beta_2))\mu^2)}}{\alpha\Lambda\mu},\\
&t_2=-T_1+\frac{\sqrt{\Lambda^2(\beta_2^2\Lambda^2-2T_1\beta_1\beta_2\Lambda\mu
+T_1^2(\beta_1^2-\beta_2(\alpha+\beta_2))\mu^2)}}{(\alpha+\beta_1)\Lambda\mu},\\
&t_3=\frac{\beta_2\Lambda-T_1(\alpha+2\beta_1+\beta_2)\mu}{(\alpha+\beta_1)\mu},\quad
t_4=\frac{\beta_2(T_1\mu-\Lambda)}{(\alpha+\beta_1)\mu}.
\end{align*}
\begin{thm} \label{thm:fluidT1}
The equilibrium arrival strategy when $T_1\!+\!T_2\geq  \frac{\Lambda}{\mu}$ and $T_1<T_{e_1}$ is as follows.\\
   Case 1: If $T_1\leq  A_1$ then a pure equilibrium will be formed at time $-T_1$.  The corresponding social cost is $\Lambda(\beta_1T_1+\frac{\alpha \Lambda}{2\mu}+\beta_2\frac{\mu(\frac{\Lambda}{\mu}-T_1)^2}{2\Lambda})$.\\
  Case 2:  If $ A_1< T_1\leq A_2 $ the equilibrium arrival strategy incorporates an atom with size $p_1$ at time $-T_1$, followed by a gap until time $t_1>0$. Then there is a positive density of $\frac{\alpha}{\alpha+\beta_2}\frac{\mu}{\Lambda}$ along $[t_1,\frac{\Lambda}{\mu}-T_1]$. The corresponding social cost is $ \Lambda\beta_2(\frac{\Lambda}{\mu}-T_1)$.\\
  \indent The social costs in cases 3 and 4 are identical to this case.\\
    Case 3: If $ A_2< T_1\leq A_3$ the arrival strategy incorporates an atom with size $p_1$ (the same size as the previous case) at time $-T_1$, followed by a gap until time $t_2<0$. Then there is a positive density of $\frac{\alpha+\beta_1}{\alpha+\beta_2}\frac{\mu}{\Lambda}$ along $[t_2,0]$ and then a positive density of $\frac{\alpha}{\alpha+\beta_2}\frac{\mu}{\Lambda}$ along $[0, \frac{\Lambda}{\mu}-T_1]$.\\
  Case 4:  If $ A_3<T_1\leq A_4$ the arrival strategy incorporates an atom with size $p_2$ at time $-T_1$, followed by a gap until time $t_3<0$. Then there is a positive density of $\frac{\alpha+\beta_1}{\alpha}\frac{\mu}{\Lambda}$ along $[t_3,t_4]$, and then a density of $\frac{\alpha+\beta_1}{\alpha+\beta_2}\frac{\mu}{\Lambda}$ along $[t_4,0]$ and a positive density of $\frac{\alpha}{\alpha+\beta_2}\frac{\mu}{\Lambda}$ along $[0, \frac{\Lambda}{\mu}-T_1]$.
\end{thm}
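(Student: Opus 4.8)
The plan is to reuse the mechanism behind Theorem~\ref{thm:fluidnocons}: in any mixed symmetric equilibrium the expected cost is constant on the support of the arrival distribution, so on each maximal subinterval where the cost has a single analytic form its derivative must vanish. With the server opening at $-T_1$ and remaining continuously busy, the fluid waiting time of a drop arriving at $t$ is $w(t)=\frac{\Lambda F(t)-\mu(t+T_1)}{\mu}$, exactly as before but with $T_{e_1}$ replaced by $T_1$. Differentiating the three cost functions $C_1,C_2,C_3$ of Theorem~\ref{thm:fluidnocons} and equating to zero therefore reproduces the same three candidate densities: $\frac{\alpha+\beta_1}{\alpha}\frac{\mu}{\Lambda}$ on the earliness--waiting region ($t<t_4$), $\frac{\alpha+\beta_1}{\alpha+\beta_2}\frac{\mu}{\Lambda}$ on the all-three-costs region ($t_4<t<0$), and $\frac{\alpha}{\alpha+\beta_2}\frac{\mu}{\Lambda}$ on the waiting--tardiness region ($t>0$). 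The only structural novelty is forced by the binding constraint $T_1<T_{e_1}=A_4$: by the argument already used in the constrained stochastic analysis, a positive density cannot begin at $-T_1$ (that would require $T_{e_1}=T_1$), so the equilibrium must be either pure at $-T_1$ or else an atom at $-T_1$, followed by a gap, followed by one of the density configurations above.

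First I would pin down the global boundary data. Since the server is busy throughout and the total work is $\Lambda/\mu$, the last drop is served the instant the server would idle, so the support ends at $\frac{\Lambda}{\mu}-T_1$; the standing assumption $T_1+T_2\geq\Lambda/\mu$ guarantees this does not exceed $T_2$, i.e.\ the closing time is slack. The last drop waits zero time and pays only tardiness, so the common equilibrium cost in every mixed case is $\beta_2(\frac{\Lambda}{\mu}-T_1)$, which immediately yields the social cost $\Lambda\beta_2(\frac{\Lambda}{\mu}-T_1)$ for Cases~2--4. The remaining unknowns are the atom size and the right endpoint of the gap, determined by (i) the normalization that the atom plus the density integral equals $1$, and (ii) the indifference condition that a drop joining the atom (served at a uniformly random position inside it, hence incurring the averaged earliness/waiting/tardiness cost) is exactly as expensive as arriving at the start of the density. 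This is the fluid counterpart of $g_1(t',p)=g_2(p)$; in the fluid model both sides are explicit, and eliminating the gap endpoint leaves a single quadratic in the atom size. For the pure Case~1 the social cost is instead obtained by directly integrating the three cost components over the serve-order of the single atom at $-T_1$, which produces $\Lambda\big(\beta_1T_1+\frac{\alpha\Lambda}{2\mu}+\beta_2\frac{\mu(\frac{\Lambda}{\mu}-T_1)^2}{2\Lambda}\big)$.

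The four cases then correspond to monotonically increasing $T_1$, along which the atom shrinks and the density support grows to absorb successive cost regions. The thresholds are read off from when the gap endpoint crosses a region boundary: $A_1$ is the pure/mixed transition, where a unilateral deviation to some $t\geq0$ first becomes weakly profitable; $A_2$ is where the gap endpoint crosses time $0$ (so for $A_1<T_1\leq A_2$ the gap ends at $t_1>0$ and only the waiting--tardiness density survives, whereas for $A_2<T_1\leq A_3$ it ends at $t_2<0$ and the all-three-costs density also appears); $A_3$ is where the gap endpoint enters the earliness--waiting region, bringing in the third density on $[t_3,t_4]$; and $A_4=\frac{\beta_2\Lambda}{(\beta_1+\beta_2)\mu}=T_{e_1}$ is where the atom vanishes and the configuration merges continuously into the unconstrained solution of Theorem~\ref{thm:fluidnocons}. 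Solving the quadratic in each regime and substituting the relevant density produces the closed forms $p_1,p_2$ and $t_1,\dots,t_4$.

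The main obstacle is bookkeeping rather than any single deep step. One must solve the coupled indifference/normalization system while simultaneously verifying into which cost region the gap endpoint falls, select the admissible (positive, support-ordered) root of the quadratic at each stage, and check that the four thresholds are genuinely ordered $A_1\leq A_2\leq A_3\leq A_4$ with the atom size, gap endpoint and densities matching continuously across them. I also need to confirm the pure-equilibrium characterization at $A_1$ — that for $T_1\leq A_1$ no arrival at $t\geq0$ undercuts the averaged atom cost, while for $T_1>A_1$ such a deviation strictly does, forcing the atom below $1$ — which is the fluid specialization of the $g_1$--$g_2$ comparison used for the stochastic constrained model. Once these consistency checks are in place, uniqueness follows as in Theorem~\ref{thm:f(t)unique}: fixing the opening point and imposing a constant cost determines the atom and density, and hence the entire strategy, uniquely.
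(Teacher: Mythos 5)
Your proposal is correct and follows essentially the same route as the paper's proof: the three candidate densities from the constant-cost (zero-derivative) condition of Theorem~\ref{thm:fluidnocons}, the atom-gap structure forced by the binding opening time, the indifference-plus-normalization system (the fluid analogue of $g_1=g_2$) pinning down the atom size and gap endpoint, the last-drop argument giving the common cost $\beta_2(\frac{\Lambda}{\mu}-T_1)$ and hence the social cost in Cases 2--4, and the thresholds $A_1,\dots,A_4$ read off as the pure/mixed transition and the crossings of the gap endpoint over the cost-region boundaries. The consistency checks you flag (atom size strictly below 1 for $T_1>A_1$, admissible root selection, threshold ordering) are exactly the points the paper also addresses, e.g.\ by showing $p_1$ and $p_2$ decrease in $T_1$ past $A_1$.
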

 \begin{proof}
   Case 1: $T_1\leq A_1$ implies that arriving at time $-T_1$ with everyone else is the best response, and hence the equilibrium is pure. The expected disutility of arriving with everyone else at time $-T_1$ is $\beta_1T_1+\frac{\alpha \Lambda}{2\mu}+\beta_2\frac{\mu(\frac{\Lambda}{\mu}-T_1)^2}{2\Lambda}$, while the best response against that is arriving at $t=\frac{\Lambda}{\mu}-T_1$, where the expected disutility is $\beta_{2}(\frac{\Lambda}{\mu}-T_1)$. In fact, $A_1$ is the threshold such that lower values of $T_1$ make the expected disutility of arriving with everyone else at time $-T_1$ smaller. The social cost is just the product of the total volume $\Lambda$ and the expected disutility of each drop.\\ \\
    \indent In the next three cases the equilibrium is mixed with an atom at $-T_1$. The atom is always followed by a gap and then by one, two or three different constant positive densities. The values of the densities are obtained similarly to Theorem~\ref{thm:fluidnocons}, with only their boundaries changing. We further conclude that an atom of size $p=1$ cannot be formed in any of the cases 2, 3, and 4 due to the following: From case 1, when $T_1\leq A_1$, the atom has the size of 1.  In order to rule out the possibility a pure strategy in the other cases, we show that from $t=-A_1$, the atom (i.e. $p_1$ or $p_2$) is decreasing with $T_1$. That is, from the first moment that $T_1>A_1$ the atom is smaller than 1. In the expression that computes the value of $p_1$, $T_1$ appears three times. The first two comes with a negative parameter. The third appears in the expression: $(2\beta_2 \Lambda^2-2T_1\beta_1\Lambda\mu)^2$, we show that the difference (inside the square) is positive and hence $p_1$ decreases when $T_1$ increases. Since $T_1$ is an effective constraint, $T_1<\frac{\beta_2 \Lambda} {(\beta_1+\beta_2)\mu}$ and as a consequence, after some simple algebra, $(2\beta_2 \Lambda^2-2T_1\beta_1\Lambda\mu)>0$. We follow the same line of analysis to show that $p_2<1$.
        \\

 Case 2:  $T_1>A_1$ implies that a pure strategy is no longer possible, which is a direct result from the previous case. $T_1<A_2$ is equivalent to $t_1>0$ which is the case here. $t_1$ and $p_1$ are obtained by solving the equations: $$\beta_1 T_1+\frac{\alpha p_1\Lambda}{2\mu}+\beta_2\frac{(\Lambda p_1-T_1\mu)^2}{2\mu p_1 \Lambda}=\beta_2(\frac{p_1\Lambda}{\mu}-T_1)+\alpha(\frac{p_1\Lambda}{\mu}-T_1-t_1),$$ and $$p_1+\frac{\mu}{\Lambda}\frac{\alpha}{\alpha+\beta_2}(\frac{\Lambda}{\mu}-T_1-t_1)=1.$$
where the first equation is due to similar disutilities at $-T_1$ and $t_1$ (which is of course true for any possible time in the equilibrium arrival strategy). The second equation sums the density probability. To obtain the social cost, we first state that the last drop will arrive exactly at $t=\frac{\Lambda}{\mu}-T_1$, to an empty system. Hence its total cost is $\beta_2(\frac{\Lambda}{\mu}-T_1)$. Consequently, the social cost is $\Lambda\beta_2(\frac{\Lambda}{\mu}-T_1)$. In fact, this is true for cases 3 and 4 as well. Consequently, in all mixed equilibrium cases (i.e., 2, 3 and 4) the equilibrium social costs are the same.\\
   Case 3:  $T_1>A_2$ is equivalent to $t_2<0$. $A_3$ is the threshold which separates this case from case 4, and will be discussed there. $t_2$ and $p_1$ form the solution of the equations: $$\beta_1 T_1+\frac{\alpha p_1\Lambda}{2\mu}+\beta_2\frac{(\Lambda p_1-T_1\mu)^2}{2\mu p_1 \Lambda}=-\beta_1t_1+\alpha(\frac{p_1\Lambda}{\mu}-T_1-t_1)+\beta_2(\frac{p_1\Lambda}{\mu}-T_1),$$ and $$p_1+\alpha\frac{\mu}{\Lambda}\frac{\alpha}{\alpha\beta_2}(\frac{\Lambda}{\mu}-T_1-t_1)+\frac{\alpha+\beta_1}{\alpha+\beta_2}\frac{\mu}{\Lambda}(0-t_2)=1.$$ which both follow the same line of analysis as in the previous case. The social cost was already discussed in case 2.\\
   Case 4: $T_1> A_3$ means that $T_1$ is large enough that the first to arrive after the atom will get service prior to time 0. Recall that $t_3$ is the first time customers arrive after the atom, and arriving after $t_4$ means that service will be granted after time 0. In fact, $A_3$ is the value of $T_1$ such that $t_3=t_4$. $T_1<A_4$ implies that customers cannot arrive as they would have under equilibrium when there are no constraints at all. Consequently, they start arriving from the first point in time they are allowed (i.e. $-T_{e_1}=-A_4$). Otherwise, is they start arriving after $-A_4$, then clearly, one can gain by arriving exactly at $-A_4$. $p_2$, $t_3$ and $t_4$ form the solution of the equations $$\beta_1T_1+\frac{\alpha p_2\Lambda}{2\mu}=-\beta_1t_3+\alpha(\frac{p_2 \Lambda}{\mu}-T_1-t_3),$$ $$-\beta_1 t_3+\alpha(\frac{p_2 \Lambda}{\mu}-T_1-t_3)=-t_4(\beta_1+\alpha),$$ and $$p_2+(t_4-t_3)\frac{\alpha+\beta_1}{\alpha}\frac{\mu}{\Lambda} +\frac{\alpha+\beta_1}{\alpha+\beta_2}\frac{\mu}{\Lambda}(0-t_4)+\frac{\mu}{\Lambda}\frac{\alpha}{\alpha+\beta_2}(\frac{\Lambda}{\mu}-T_1) =1.$$
    Again, the equations arise from the same considerations as in the previous cases. The social cost was already discussed in case 2.
\end{proof}
\begin{thm}
 If $T_1\!+\!T_2\geq  \frac{\Lambda}{\mu}$ and $T_1<T_{e_1}$, then the unique socially optimal strategy has a uniform density of $\frac{\mu}{\Lambda}$ along $[-T_1,\frac{\Lambda}{\mu}-T_1]$, and its overall cost is $\frac{1}{2}\mu(\beta_1T_1^2+\beta_2(T_1-\frac{\Lambda}{\mu})^2)$.
\end{thm}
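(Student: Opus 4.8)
The plan is to reduce the infinite-dimensional problem of choosing an arrival density to a one-dimensional placement problem, exactly as in the unconstrained socially optimal theorem, and then to show that the opening-time constraint binds precisely when $T_1<T_{e_1}$.

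First I would argue that any socially optimal schedule carries no waiting cost and, in fact, sends fluid at the exact rate $\mu$ over a single connected interval. The reasoning is a rearrangement argument: whenever the instantaneous arrival flow exceeds $\mu$ a backlog forms and waiting cost accrues, whereas the hypothesis $T_1+T_2\geq\frac{\Lambda}{\mu}$ guarantees that the admissible window $[-T_1,T_2]$ is long enough to accommodate the whole volume $\Lambda$ at rate $\mu$. Hence one can always redistribute any excess flow onto instants where the flow is below $\mu$, eliminating waiting without spreading the support, and any schedule that is slower than $\mu$ on part of its support can be compressed toward time $0$, which only reduces earliness and tardiness. This forces the optimal density to equal $\frac{\mu}{\Lambda}$ on an interval $[a,a+\frac{\Lambda}{\mu}]$ and to vanish elsewhere; it is exactly the ``uniform rate $\mu$ to avoid waiting'' principle already used for the unconstrained optimum.

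Second, with waiting eliminated each drop enters service upon arrival, so the social cost of the window $[a,a+\frac{\Lambda}{\mu}]$ is
\begin{align*}
g(a)=\mu\int_{a}^{a+\Lambda/\mu}\left(\beta_1(-t)^{+}+\beta_2 t^{+}\right)dt ,
\end{align*}
to be minimized over $a$ subject to $a\geq -T_1$ (opening) and $a+\frac{\Lambda}{\mu}\leq T_2$ (closing). The integrand is convex, so $g$ is convex, and its unconstrained minimizer is the window $[-\frac{\beta_2\Lambda}{(\beta_1+\beta_2)\mu},\frac{\beta_1\Lambda}{(\beta_1+\beta_2)\mu}]$ of the unconstrained socially optimal theorem, whose left endpoint equals $-T_{e_1}$ by Theorem~\ref{thm:fluidnocons}. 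Differentiating and using $a=-T_1<0<\frac{\Lambda}{\mu}-T_1$ gives $g'(-T_1)=\mu\big(\beta_2(\frac{\Lambda}{\mu}-T_1)-\beta_1 T_1\big)$, which is nonnegative precisely when $T_1\leq\frac{\beta_2\Lambda}{(\beta_1+\beta_2)\mu}=T_{e_1}$. Thus under the hypothesis $T_1<T_{e_1}$ the opening constraint binds and the convex minimizer sits at $a=-T_1$; this choice also respects the closing constraint, since $-T_1+\frac{\Lambda}{\mu}\leq T_2$ is exactly the assumption $T_1+T_2\geq\frac{\Lambda}{\mu}$.

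Finally I would evaluate $g(-T_1)$ by splitting the integral at $0$ (legitimate since $-T_1<0<\frac{\Lambda}{\mu}-T_1$), obtaining $\frac{1}{2}\mu\big(\beta_1 T_1^{2}+\beta_2(\frac{\Lambda}{\mu}-T_1)^{2}\big)$, which is the claimed cost after writing $(\frac{\Lambda}{\mu}-T_1)^2=(T_1-\frac{\Lambda}{\mu})^2$. Uniqueness follows from strict convexity of the objective: zero waiting and rate $\mu$ are strictly forced, and the placement $a$ is then pinned down uniquely. The main obstacle is the first step---rigorously showing that every optimal schedule is the no-waiting, rate-$\mu$, connected-support schedule---since one must rule out atoms, gaps and sub-$\mu$ stretches for completely general admissible densities; the cleanest route is an explicit exchange argument demonstrating that any such feature can be strictly improved upon, with equality only for the claimed strategy.
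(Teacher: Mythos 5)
Your proposal is correct and follows essentially the same route as the paper's proof: both reduce the problem to scheduling fluid at the exact rate $\mu$ with no waiting (feasible because $T_1+T_2\geq\frac{\Lambda}{\mu}$), push the length-$\frac{\Lambda}{\mu}$ window as far left as the opening time permits since the unconstrained optimum's left endpoint $-T_{e_1}$ lies outside the constraint, and evaluate the same earliness/tardiness integral $-\int_{-T_1}^{0}\beta_1\mu t\,dt+\int_{0}^{\Lambda/\mu-T_1}\beta_2\mu t\,dt$. The one refinement you add is making the paper's assertion that ``customers will arrive as early as they can'' rigorous through the convexity of the placement objective $g(a)$ and the sign of $g'(-T_1)$, which tightens the argument but does not constitute a different approach.
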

\begin{proof}
To obtain the socially optimal strategy, recall from
Theorem~\ref{thm:fluidnocons}, when there were no arrival
constraints, that customers' arrival strategy starts at
$t=-\frac{\beta_2\Lambda}{(\beta_1+\beta_2)\mu}$ for both the
equilibrium and the socially optimal strategies. Clearly, now that
$T_1<T_{e_1}$, customers will arrive later. Of course, within the
new constraint customers will arrive as early as they can, which is
time $-T_1$. Similar to Theorem~\ref{thm:fluidnocons}, customers
will arrive at a rate $\mu$ in order to avoid waiting cost, with any
other rate being suboptimal. After deriving the social optimal strategy, the overall \added{social} cost can be computed accordingly.
Whereas, it is obtained by,
 \begin{align*}
  -\int_{-T_1}^{0}\beta_1\mu tdt+\int_{0}^{\frac{\Lambda}{\mu}-T_1}\beta_2\mu
  tdt.
  \end{align*}
\end{proof}
\begin{remark}
The PoA in case 1 equals $\frac{\alpha\Lambda^2+2T_1\beta_1\Lambda\mu+\beta_2(\Lambda-T_1\mu)^2} {T_1^2\beta_1\mu^2+\beta_2(\Lambda-T_1\mu)^2}$. The PoA in cases 2, 3 and 4 equals $\frac{2\beta_2\Lambda(\Lambda-T_1\mu)}{T_1^2\beta_1\mu^2+\beta_2(\Lambda-T_1\mu)^2}$. One can see that $\alpha$ has no impact on the PoA in these cases. This is a surprising result, because drops under equilibrium pay a waiting cost.
\end{remark}
We next deal with the case where only the closing time affects customers' strategy.
\begin{thm}\label{thm:fluidT2}
If  $T_1+T_2\geq \frac{\Lambda}{\mu}$ and $T_2<T_{e_2}$, then the equilibrium arrival strategy is
\begin{align*}
  f(t) = \begin{cases}
\frac{\alpha+\beta_1}{\alpha}\frac{\mu}{\Lambda},  &\hspace{0.8cm}  -\frac{(\alpha+\beta_2)\Lambda-T_2\alpha\mu}{(\alpha+\beta_1+\beta_2)\mu}< t <\frac{\beta_1(-\alpha\Lambda-\beta_2\Lambda+T_2\alpha\mu)}{(\alpha+\beta_1)(\alpha+\beta_1+\beta_2)\mu},\\
\frac{\alpha+\beta_1}{\alpha+\beta_2}\frac{\mu}{\Lambda},     &  -\frac{\beta_1(-\alpha\Lambda-\beta_2\Lambda+T_2\alpha\mu)}{(\alpha+\beta_1)(\alpha+\beta_1+\beta_2)\mu}\leq  t \leq 0 ,  \\
\frac{\alpha}{\alpha+\beta_2}\frac{\mu}{\Lambda},       &
\hspace{3.05cm} 0\leq  t \leq T_2.
                          \end{cases}
\end{align*}
  Under equilibrium, the social cost is $\Lambda\beta_1\frac{(\alpha+\beta_2)\Lambda-T_2\alpha\mu}{(\alpha+\beta_1+\beta_2)\mu}$.
\end{thm}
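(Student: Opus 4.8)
The plan is to exploit the structural parallel with Theorem~\ref{thm:fluidnocons}. Since only the closing time binds and $T_1+T_2\geq \Lambda/\mu$, arrivals still commence at some free interior epoch $-T'_{e_1}$ where the system is empty, and the support is partitioned into the same three regimes as in the unconstrained fluid model: an initial interval on which a drop both arrives and is served before time $0$ (earliness plus waiting only), a middle interval $[t_1,0]$ on which a drop arrives before $0$ but is served after $0$ (all three costs), and the interval $[0,T_2]$ (waiting plus tardiness only), where $t_1<0$ is the unique epoch at which an arriving drop enters service exactly at time $0$. On each regime the cost coincides with $C_1,C_2,C_3$ from the proof of Theorem~\ref{thm:fluidnocons}; imposing the equilibrium requirement that the cost be flat (zero derivative) therefore reproduces verbatim the three density values $\tfrac{\alpha+\beta_1}{\alpha}\tfrac{\mu}{\Lambda}$, $\tfrac{\alpha+\beta_1}{\alpha+\beta_2}\tfrac{\mu}{\Lambda}$ and $\tfrac{\alpha}{\alpha+\beta_2}\tfrac{\mu}{\Lambda}$, which do not depend on the endpoints and so are unaffected by the constraint.

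First I would isolate exactly where the constrained problem departs from the unconstrained one. The only change is in the boundary data: the upper endpoint of the support is now the exogenous $T_2$ rather than a free boundary, so the terminal condition~(\ref{eq:f(Te)=0}) is dropped. This leaves two unknowns, the opening offset $T'_{e_1}$ and the breakpoint $t_1$, to be fixed by two equations. The first is the fluid workload/transition condition: writing the workload at $t$ as $\Lambda F(t)-\mu(t+T'_{e_1})$ and the waiting time as workload$/\mu$, the requirement that a drop arriving at $t_1$ starts service at $0$ reads
\begin{equation*}
t_1+\frac{\Lambda F(t_1)-\mu(t_1+T'_{e_1})}{\mu}=0 .
\end{equation*}
Substituting $F(t_1)$ (the mass of the first regime) collapses this to the linear relation $t_1=-\tfrac{\beta_1}{\alpha+\beta_1}T'_{e_1}$. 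The second equation is normalization, $\int_{-T'_{e_1}}^{T_2}f=1$, which after inserting the three densities and the lengths of the three intervals becomes a single linear equation in $T'_{e_1}$. Solving yields $T'_{e_1}=\frac{(\alpha+\beta_2)\Lambda-T_2\alpha\mu}{(\alpha+\beta_1+\beta_2)\mu}$, giving the stated support and breakpoint.

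For the social cost I would invoke that the equilibrium cost is constant along the support and evaluate it at the cheapest-to-analyse point, the first drop: arriving at $-T'_{e_1}$ it meets an empty system, incurs neither waiting nor tardiness, and hence pays only the earliness $\beta_1 T'_{e_1}$. Multiplying by the total volume gives the social cost $\Lambda\beta_1 T'_{e_1}=\Lambda\beta_1\frac{(\alpha+\beta_2)\Lambda-T_2\alpha\mu}{(\alpha+\beta_1+\beta_2)\mu}$.

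The main obstacle is not the algebra but justifying that the piecewise-constant density really constitutes an equilibrium, i.e.\ that ``cost flat on each regime'' glues into ``cost flat on the whole support.'' This requires continuity of the cost at the two junctions. Continuity at $t_1$ is precisely what the transition condition buys --- the earliness-only and all-three cost expressions agree there exactly when the workload makes service begin at $0$ --- while continuity at $0$ is automatic, since the two adjacent cost expressions differ only by the term $-\beta_1 t$, which vanishes at $t=0$. I would close by confirming that under the standing regime $T_2<T_{e_2}$ the endpoints are correctly ordered, $-T'_{e_1}<t_1<0<T_2$, and that no deviation outside $[-T'_{e_1},T_2]$ lowers the cost, so the constructed strategy is indeed the equilibrium.
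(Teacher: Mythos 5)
Your proposal is correct and follows essentially the same route as the paper, whose proof is simply to repeat the analysis of Theorem~\ref{thm:fluidnocons} with the free upper endpoint $T_{e_2}$ replaced by the fixed $T_2$ (and the terminal condition dropped); your write-up just makes explicit the resulting two-equation system for $T'_{e_1}$ and $t_1$ and the evaluation of the constant cost at the first arrival. The filled-in details (the collapse to $t_1=-\tfrac{\beta_1}{\alpha+\beta_1}T'_{e_1}$, the normalization giving $T'_{e_1}=\tfrac{(\alpha+\beta_2)\Lambda-T_2\alpha\mu}{(\alpha+\beta_1+\beta_2)\mu}$, and the cost-continuity checks at the junctions) are all accurate.
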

\begin{proof}
We follow the same line of analysis as in Theorem~\ref{thm:fluidnocons}, where now $T_{e_2}$ is replaced by a fixed value of $T_2$ in the equilibrium and socially optimal arrival strategies, respectively.
\end{proof}
\begin{thm}
If  $T_1+T_2\geq \frac{\Lambda}{\mu}$ and $T_2<T_{e_2}$, then the unique socially optimal arrival strategy has a density of $\frac{\mu}{\Lambda}$ along $[-\frac{\alpha\Lambda+\beta_2\Lambda-T_2\alpha\mu}{(\alpha+\beta_1+\beta_2)\mu},T_2]$, and a mass of $\Lambda-(\frac{\alpha\Lambda+\beta_2\Lambda-T_2\alpha\mu}{(\alpha+\beta_1+\beta_2)\mu}+T_2)\mu$ at $T_2$. The socially optimal strategy overall cost is
\begin{align}
\frac{\beta_1(\alpha+\beta_2)\Lambda^2-2T_2\alpha\beta_1\Lambda\mu+T_2^2\alpha(\beta_1+\beta_2)\mu^2}{2(\alpha+\beta_1+\beta_2)\mu}.
\label{eq:T2finiteoptiamloverallcost}
\end{align}\label{thm:optiamlT2finite}
\end{thm}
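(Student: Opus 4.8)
The plan is to follow the recipe of Theorem~\ref{thm:fluidnocons}: reduce the problem to a one–dimensional convex minimization over a single unknown, the time $-L$ of the first arrival, and then read off both the optimal strategy and its cost. The only genuinely new ingredient is that the right end of the support cannot exceed $T_2$, which forces a surplus of mass to pile up at the deadline as an atom.

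First I would fix the first-arrival time $-L$ and determine the optimal shape of the profile given $L$. Let $A(t)$ and $D(t)$ denote the cumulative arrived and served volumes. Idling is never optimal, since leaving the server empty while work is outstanding only postpones later completions and strictly increases tardiness (and never lowers waiting or earliness); hence the server runs at rate $\mu$ throughout $[-L,-L+\Lambda/\mu]$, so $D(t)=\mu(t+L)$ is completely determined by $L$. Consequently the entire tardiness bill, $\beta_2\mu(-L+\Lambda/\mu)^2/2$, is a function of $L$ alone and is insensitive to how arrivals are scheduled. The remaining costs --- waiting $\alpha\int (A(t)-D(t))\,dt$ and earliness --- are then minimized by making $A(t)$ as \emph{late} as feasibility permits. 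Since work-conservation requires $A(t)\ge D(t)$, the latest admissible profile sets $A(t)=D(t)$ for $t<T_2$, i.e. a uniform density $\mu/\Lambda$ with an empty queue and no waiting, together with the forced surplus $m=\Lambda-\mu(L+T_2)$ dumped as an atom exactly at $T_2$. This single choice simultaneously annihilates the waiting cost before $T_2$ and pushes the pre-zero arrivals as far to the right as possible, so it minimizes waiting and earliness at once; it is therefore the unique optimizer for the given $L$. Feasibility of the left endpoint, $-L\ge -T_1$, follows from the hypothesis $T_1+T_2\ge \Lambda/\mu$, because then $L\le \Lambda/\mu-T_2\le T_1$.

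With the shape pinned, the social cost is an explicit function of $L$: the continuous part contributes $\tfrac12\beta_1\mu L^2$ in earliness and $\tfrac12\beta_2\mu T_2^2$ in tardiness with no waiting, while the atom, whose drops enter service uniformly over $[T_2,T_2+m/\mu]$, contributes $(\alpha+\beta_2)\tfrac{m^2}{2\mu}+\beta_2 T_2 m$ after integrating $\alpha(s-T_2)+\beta_2 s$ against $\mu\,ds$. Substituting $m=\Lambda-\mu(L+T_2)$, the total $C(L)$ is quadratic in $L$ with leading coefficient $\tfrac12\mu(\alpha+\beta_1+\beta_2)>0$; it is strictly convex, which yields existence and \emph{uniqueness} of the minimizer.

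The final step is to solve $C'(L)=0$. Using $dm/dL=-\mu$, the stationarity condition reads $\beta_1\mu L=(\alpha+\beta_2)m+\beta_2 T_2\mu$, and substituting $m=\Lambda-\mu(L+T_2)$ gives
\begin{align*}
L=\frac{(\alpha+\beta_2)\Lambda-\alpha\mu T_2}{(\alpha+\beta_1+\beta_2)\mu},
\end{align*}
which is exactly the stated left endpoint, with atom mass $\Lambda-(L+T_2)\mu$ as claimed. Plugging this $L$ back into $C(L)$ and simplifying produces the overall cost~(\ref{eq:T2finiteoptiamloverallcost}); as a check, letting $T_2\uparrow T_{e_2}=\tfrac{\beta_1\Lambda}{(\beta_1+\beta_2)\mu}$ drives $m\to0$ and $L\to\tfrac{\beta_2\Lambda}{(\beta_1+\beta_2)\mu}$, recovering the unconstrained optimum~(\ref{eq:noconsoptiamlsocialcost}). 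I expect the one subtle point to be the structural step, and specifically the observation that fixing the first-arrival time freezes the whole service schedule, so that tardiness is determined by $L$ while waiting and earliness are jointly minimized by the ``rate $\mu$ then atom at $T_2$'' profile; once that decomposition is in hand, the rest is the routine convex optimization and algebra indicated above.
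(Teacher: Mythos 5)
Your proof is correct and takes essentially the same route as the paper: posit arrivals at rate $\mu$ (density $\mu/\Lambda$) from a lower bound $-L$ up to $T_2$, plus an atom of the leftover mass $m=\Lambda-(L+T_2)\mu$ at $T_2$, write the social cost as a function of the single unknown $L$, and minimize. The one substantive difference is in the cost accounting for the atom: your objective includes the term $\beta_2 T_2 m$ (each drop in the atom is tardy by at least $T_2$ before its extra delay in queue), whereas the paper's displayed objective~(\ref{eq:optiamlT2LB}) contains only the $(\alpha+\beta_2)m^2/(2\mu)$ term. Your accounting is the correct one: minimizing~(\ref{eq:optiamlT2LB}) exactly as printed gives $L=(\alpha+\beta_2)(\Lambda-T_2\mu)/\left((\alpha+\beta_1+\beta_2)\mu\right)$, which does \emph{not} match the endpoint stated in the theorem, while your stationarity condition $\beta_1\mu L=(\alpha+\beta_2)m+\beta_2 T_2\mu$ yields precisely the stated $L=\left((\alpha+\beta_2)\Lambda-\alpha T_2\mu\right)/\left((\alpha+\beta_1+\beta_2)\mu\right)$ and, upon substitution, the cost~(\ref{eq:T2finiteoptiamloverallcost}); so your version repairs what appears to be a dropped term in the paper's proof. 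Beyond that, your structural justifications (no idling, the latest-feasible arrival profile minimizing waiting and earliness simultaneously, strict convexity in $L$ giving uniqueness, and feasibility $L\leq T_1$ from $T_1+T_2\geq\Lambda/\mu$) and the limiting check $T_2\uparrow T_{e_2}$ recovering~(\ref{eq:noconsoptiamlsocialcost}) are sound and go further than the paper's one-line argument.
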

\begin{proof}
Clearly, arriving at any other rate than $\mu$ is suboptimal. Moreover, we construct the socially optimal cost function such that we allow a mass at $T_2$. This is because it can be less expensive for a mass to arrive at time $T_2$, rather than arriving earlier than everyone else. Let $-L\!B$ be the lower bound of the socially optimal arrival support. The socially optimal cost function is
\begin{align}
\int_{-L\!B}^0-\beta_1 t\mu dt\! +\!\int_{0}^{T_2} \beta_2 t\mu
dt\!+\!(\Lambda\!-\!(LB\!+\!T_2)\mu)\frac{(\Lambda\!-\!(L\!B\!+\!T_2)\mu)}{2\mu}(\alpha\!+\!\beta_2).
\label{eq:optiamlT2LB}
\end{align}
Finally, $L\!B$ is the value that minimizes~(\ref{eq:optiamlT2LB}).
\end{proof}
\begin{remark}
The PoA is $\frac{2\beta_1\Lambda((\alpha+\beta_2)\Lambda-T_2\alpha\mu)} {\beta_1(\alpha+\beta_2)\Lambda^2-2T_2\alpha\beta_1\Lambda\mu+T_2^2\alpha(\beta_1+\beta_2)\mu^2}$. We observe that if $T_2=0$, the PoA equals exactly 2.
\end{remark}
\noindent We next address the case where $(T_1+T_2)<\frac{\Lambda}{\mu}$. Different shapes of equilibrium arrival strategies are formed by different set of values of $(T_1,T_2)$. Similarly to Theorem~\ref{thm:fluidT1}, we split these into 4 cases for different set of values of $T_1$, only now the thresholds that distinguish them are effected by the value of $T_2$. Before presenting the equilibrium arrival strategy we define several points in time and probabilities, which play a role in the specific structure of the equilibrium.
\begin{align*}
&A'_1=\frac{-\alpha\Lambda-\beta_1\Lambda+\sqrt{\Lambda}\sqrt{((\alpha +\beta_1)^2+\alpha\beta_2+\beta_2^2) \Lambda-2T_2\beta_2\alpha\mu}}{\beta_2\mu},\\
&A'_2=\frac{-\beta_1\beta_2\Lambda-\alpha(\beta_1+\beta_2)\Lambda+T_2\alpha\beta_1\mu +\alpha^2(T_2\mu-\Lambda)}{\beta_2(\alpha+\beta_2)\mu}+\\ &\frac{\sqrt{((\alpha+\beta_2)^2+\alpha\beta_2+\beta_2^2)((\alpha+\beta_2)\Lambda-T_2\alpha\mu)^2}}{\beta_2(\alpha+\beta_2)\mu},\\
&A'_3=\frac{2(\alpha+\beta_2)\Lambda-2T_2\alpha\mu}{(3\alpha+2(\beta_1+\beta_2))\mu},
 \quad A_4=\frac{\Lambda}{\mu}-T_2,\\
&p'_1=\frac{(\alpha\!+\!\beta_2)\Lambda^2\!-\!(T_2\alpha\!+\!T_1(\alpha\!+\!\beta_1))\Lambda\mu}{(\alpha\!+\!\beta_2)\Lambda^2}+\\&\frac{ \sqrt{\Lambda^2(\!-\!T_1^2\beta_2(\alpha\!+\!\beta_2)\mu^2\!+\!((\alpha\!+\!\beta_2)\Lambda\!- \!(T_2\alpha\!+\!T_1(\alpha\!+\!\beta_1))\mu)^2)}}{(\alpha\!+\!\beta_2)\Lambda^2},\\
&p'_2=\frac{2(\alpha+\beta_2)\Lambda-2(T_2\alpha+T_1(\alpha+\beta_1+\beta_2))\mu}{\alpha\Lambda},\\
&t'_1\!\!=\!\frac{-T_1(\alpha\!\!+\!\!\beta_1)\Lambda\mu\!+\!\!\sqrt{\Lambda^2(\!-\!T_1^2\beta_2(\alpha\!+\!\beta_2)\mu^2\!\!+\!\!((\alpha\!+\!\beta_2)\Lambda \!\!-\!\!(T_2\alpha\!+\!T_1(\alpha\!+\!\beta_1))\mu)^2}}{\alpha\Lambda\mu},\\
&t'_2=-T_1\!\!+\!\frac{\sqrt{\Lambda^2(-T_1^2\beta_2((\alpha\!+\!\beta_2)\mu^2\!+\! ((\alpha\!+\!\beta_2)\Lambda)\!-\! (T_2\alpha\!+\!T_1(\alpha\!+\!\beta_1))\mu)^2)}}{(\alpha\!+\!\beta_1)\Lambda\mu},\\
&t'_3=\frac{\Lambda(\alpha+\beta_2)-(T_2\alpha+T_1(2(\alpha+\beta_1)+\beta_2))\mu} {(\alpha+\beta_1)\mu}, \\ &t'_4=\frac{-(\alpha+\beta_2)\Lambda+(T_2\alpha+T_1(\alpha+\beta_2))\mu} {(\alpha+\beta_1)\mu}.
\end{align*}
\begin{thm}\label{thm:fluidT1T2}
If  $T_1+T_2<\frac{\Lambda}{\mu}$, the equilibrium arrival strategy is as follows.\\
 Case 1: If $T_1\leq A'_1$ then a pure equilibrium is formed at time $-T_1$. The corresponding social cost is \begin{align}
 \Lambda\left(\beta_1T_1+\frac{\alpha
 \Lambda}{2\mu}+\beta_2\frac{\mu(\frac{\Lambda}{\mu}-T_1)^2}{2\Lambda}\right).
 \label{eq:socialcosteqcase1t1finite}\end{align}
 Case 2: If $ A'_1< T_1\leq A'_2 $ the arrival strategy incorporates an atom with size $p'_1$ at time $-T_1$, followed by a gap until time $t'_1>0$. Then there is a density of $\frac{\alpha}{\alpha+\beta_2}\frac{\mu}{\Lambda}$ along $[t'_1,T_2]$. The corresponding social cost is \begin{align}
 \Lambda\left(T_2\beta_2+(\alpha+\beta_2)\left(\frac{\Lambda}{\mu}-(T_1+T_2)\right)\right).\label{eq:eqsocialcostT1finitecase234}
   \end{align}
   In the next two cases the social cost is the same.\\
 Case 3: If $ A'_2< T_1\leq A'_3$ then the arrival strategy incorporates an atom of size $p'_1$ (the same size as the previous case) at time $-T_1$, followed by a gap until time $t'_2<0$. Then there is a positive density of $\frac{\alpha+\beta_1}{\alpha+\beta_2}\frac{\mu}{\Lambda}$ along $[t'_2,0]$, and finally a positive density of $\frac{\alpha}{\alpha+\beta_2}\frac{\mu}{\Lambda}$ along $[0, T_2]$.\\
  Case 4: If $ A'_3< T_1\leq A'_4$ then the arrival strategy incorporates an atom with size $p'_2$ at time $-T_1$, followed by a gap until time $t'_3<0$. Then there is a positive density of $\frac{\alpha+\beta_1}{\alpha}\frac{\mu}{\Lambda}$ along $[t'_3,t'_4]$, then a density of $\frac{\alpha+\beta_1}{\alpha+\beta_2}\frac{\mu}{\Lambda}$ along $[t'_4,0]$, followed by another density of $\frac{\alpha}{\alpha+\beta_2}\frac{\mu}{\Lambda}$ along $[0, T_2]$.
\end{thm}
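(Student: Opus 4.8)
The plan is to follow the template of Theorems~\ref{thm:fluidnocons} and~\ref{thm:fluidT1} almost verbatim, the only structural novelty being that the closing time now truncates the support. Since we seek a symmetric equilibrium, the expected cost must be constant on the support, so within each of the three cost regimes (earliness$+$waiting, all three costs, waiting$+$tardiness) the density is pinned down by differentiating the corresponding cost function $C_1,C_2,C_3$ of Theorem~\ref{thm:fluidnocons} and equating to zero. This yields exactly the three density values $\frac{\alpha+\beta_1}{\alpha}\frac{\mu}{\Lambda}$, $\frac{\alpha+\beta_1}{\alpha+\beta_2}\frac{\mu}{\Lambda}$ and $\frac{\alpha}{\alpha+\beta_2}\frac{\mu}{\Lambda}$; these do not depend on the constraints, so only the region boundaries and the atom size change from case to case. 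The decisive difference from Theorem~\ref{thm:fluidT1} is that $T_1+T_2<\frac{\Lambda}{\mu}$ forces the server to be busy throughout $[-T_1,T_2]$; hence the last drop arrives at $T_2$ into a nonempty queue, the support terminates at $T_2$ rather than at $\frac{\Lambda}{\mu}-T_1$, and every normalization integral is taken up to $T_2$.

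For each of the four cases I would first write the indifference equation equating the cost of joining the atom at $-T_1$ with the cost of arriving at the first point after the gap, and then the normalization equation stating that the atom mass plus the integrated densities equal $1$. As in Theorem~\ref{thm:fluidT1}, the atom cost is $\beta_1T_1+\frac{\alpha p\Lambda}{2\mu}+\beta_2\frac{(\Lambda p-T_1\mu)^2}{2\mu p\Lambda}$ (the last term present only while the atom overflows past time $0$), while the cost at the re-entry point is read off from $C_1,C_2$ or $C_3$ according to the sign of that point and of its service-entry time. Solving these two equations — a linear normalization together with a quadratic indifference condition — produces the closed forms for $p'_1,p'_2$ and for $t'_1,t'_2,t'_3,t'_4$, the square roots in the stated expressions being precisely the discriminants of those quadratics. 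Because the last drop enters service at $\frac{\Lambda}{\mu}-T_1$ and pays $\alpha(\frac{\Lambda}{\mu}-T_1-T_2)+\beta_2(\frac{\Lambda}{\mu}-T_1)$, multiplying by $\Lambda$ gives the common social cost~(\ref{eq:eqsocialcostT1finitecase234}) for Cases~2--4, while Case~1's cost~(\ref{eq:socialcosteqcase1t1finite}) is $\Lambda$ times the atom cost evaluated at $p=1$.

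It then remains to verify that the thresholds $A'_1,\dots,A'_4$ delimit the four cases as claimed. Here $A'_1$ is the value of $T_1$ at which the pure atom of size $1$ becomes indifferent to the best deviation at $T_2$, obtained by setting the atom cost at $p=1$ equal to $\alpha(\frac{\Lambda}{\mu}-T_1-T_2)+\beta_2(\frac{\Lambda}{\mu}-T_1)$; and one must check, exactly as in Theorem~\ref{thm:fluidT1}, that $p'_1$ and $p'_2$ are decreasing in $T_1$, so that the atom drops strictly below $1$ the instant $T_1>A'_1$, ruling out a pure equilibrium in Cases~2--4. The remaining thresholds mark the geometric transitions of the gap endpoint: $A'_2$ is where $t'_1=0$ (the re-entry point crosses time $0$), $A'_3$ is where $t'_3=t'_4$ (the earliness-only density segment degenerates), and $A'_4=\frac{\Lambda}{\mu}-T_2$ is the boundary $T_1+T_2=\frac{\Lambda}{\mu}$ beyond which the regime of Theorem~\ref{thm:fluidT1} takes over.

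I expect the main obstacle to be purely computational: confirming that the stated closed forms for $p'_i,t'_i$ really solve the coupled indifference--normalization systems, and that each $A'_i$ equals the critical $T_1$ defined by the corresponding transition, requires manipulating fairly heavy quadratic expressions in $T_1,T_2,\Lambda,\mu,\alpha,\beta_1,\beta_2$ and checking several sign conditions (positivity of the discriminants, $p'_i\in(0,1)$, and the correct ordering $t'_3<t'_4<0<T_2$). The conceptual content is entirely inherited from Theorems~\ref{thm:fluidnocons} and~\ref{thm:fluidT1}; the real work lies in bookkeeping the truncation at $T_2$ consistently across all four cases.
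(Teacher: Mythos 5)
Your proposal is correct and takes essentially the same route as the paper: the paper's entire proof is a one-line reference to Theorem~\ref{thm:fluidT1}, ``where now $\frac{\Lambda}{\mu}-T_1$ is replaced by a fixed value of $T_2$ regarding the equilibrium strategy and its social cost,'' which is precisely your plan of reusing the indifference--normalization systems of Theorems~\ref{thm:fluidnocons} and~\ref{thm:fluidT1} with the support truncated at $T_2$. Your additional details --- the best deviation in Case~1 now being at $T_2$ rather than $\frac{\Lambda}{\mu}-T_1$, the last drop entering service at $\frac{\Lambda}{\mu}-T_1$ so that the common Case~2--4 cost is $\alpha\left(\frac{\Lambda}{\mu}-T_1-T_2\right)+\beta_2\left(\frac{\Lambda}{\mu}-T_1\right)$, and the threshold identifications ($A'_1$ from pure/mixed indifference, $A'_2$ from $t'_1=0$, $A'_3$ from $t'_3=t'_4$, $A'_4=\frac{\Lambda}{\mu}-T_2$) --- are all consistent with what the paper leaves implicit.
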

\begin{proof}
We follow the same line of analysis as for Theorem~\ref{thm:fluidT1}, where now $\frac{\Lambda}{\mu}-T_1$ is replaced by a fixed value of $T_2$ regarding the equilibrium strategy and its social cost.
\end{proof}
\begin{thm} \label{thm:T1finiteT2finiteoptimal}
One of the following exhaustive and mutually exclusive cases occurs.\\
 Case \textbf{\romannum{1 }}: If $T_1\leq \frac{\alpha\Lambda+\beta_2\Lambda-T_2\alpha\mu}{(\alpha+\beta_1+\beta_2)\mu}$, then the unique socially optimal strategy has a uniform density of $\frac{\mu}{\Lambda}$ along $[-T_1,T_2]$ and then a mass of $\Lambda-\mu(T_2+T_1)$ at time $T_2$. The overall cost of the socially optimal strategy is
  \begin{align}\frac{1}{2}(T_1^2\beta_1\mu\!+\!T_2^2\beta_2\mu\!+\!2\beta_2(\Lambda\!-\!(T_1+T_2)\mu) +\frac{(\alpha+\beta_2)(\Lambda-(T_1+T_2)\mu)^2}{\mu}).\label{eq:socialoptimalT1finite1}\end{align}
 Case \textbf{\romannum{2 }}: If $T_1>\frac{\alpha\Lambda+\beta_2\Lambda-T_2\alpha\mu}{(\alpha+\beta_1+\beta_2)\mu}$, then
the unique socially optimal strategy coincides with the one
presented in Theorem~\ref{thm:optiamlT2finite}.
\end{thm}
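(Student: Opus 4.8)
The plan is to reduce the infinite-dimensional optimization over all admissible arrival profiles to a one-parameter convex minimization, exactly in the spirit of the proof of Theorem~\ref{thm:optiamlT2finite}, and then to read off the two cases according to whether the opening-time constraint binds. Since $T_1+T_2<\frac{\Lambda}{\mu}$, the whole admissible window $[-T_1,T_2]$ is too short to serve all $\Lambda$ drops at rate $\mu$; hence, no matter what profile is used, a strictly positive volume is served after the closing time $T_2$, and those drops are forced to wait. This shows that an atom at $T_2$ is unavoidable, and it is precisely this atom that distinguishes the present situation from the slack-opening case of Theorem~\ref{thm:optiamlT2finite}.

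First I would establish the structural form of the optimum: it suffices to consider profiles consisting of a uniform density $\frac{\mu}{\Lambda}$ (i.e.\ arrival rate $\mu$) on an interval $[-L,T_2]$ with $0\le L\le T_1$, together with an atom of the leftover volume $\Lambda-\mu(L+T_2)$ at time $T_2$. The justification is an exchange argument. In any optimal profile the server never idles once started, since idling only postpones service epochs, which strictly increases tardiness and waiting while leaving earliness --- a function of the arrival time alone --- untouched; thus service occupies a single interval of length $\frac{\Lambda}{\mu}$ beginning at some $s\ge -T_1$. A drop whose service epoch $v$ lies at or below $T_2$ should arrive exactly at $v$, incurring no waiting, so its cost reduces to pure earliness $-\beta_1 v$ when $v<0$ and pure tardiness $\beta_2 v$ when $v\ge 0$; this forces arrival rate $\mu$ on $[s,T_2]$. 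A drop whose service epoch exceeds $T_2$ cannot arrive at $v$ and optimally arrives at $T_2$, joining the atom. Writing $s=-L$ then yields exactly the one-parameter family above, with the admissibility $s\ge -T_1$ becoming $L\le T_1$.

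With the structure in hand, the social cost becomes the explicit function
\begin{align*}
C(L)=\frac{\beta_1\mu L^2}{2}+\frac{\beta_2\mu T_2^2}{2}+\beta_2 T_2\,m(L)+\frac{(\alpha+\beta_2)\,m(L)^2}{2\mu},\qquad m(L)=\Lambda-\mu(L+T_2),
\end{align*}
obtained by integrating the per-drop costs against the arrival rate $\mu$ over $[-L,0]$ and $[0,T_2]$ and adding the combined tardiness-plus-waiting cost of the atom at $T_2$. This is a strictly convex quadratic in $L$ (leading coefficient $\tfrac12\mu(\alpha+\beta_1+\beta_2)>0$), hence has a unique minimizer. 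Setting $C'(L)=0$ gives the unconstrained optimizer
\begin{align*}
L^\ast=\frac{\alpha\Lambda+\beta_2\Lambda-T_2\alpha\mu}{(\alpha+\beta_1+\beta_2)\mu},
\end{align*}
which is strictly positive because $T_2<\frac{\Lambda}{\mu}$. Minimizing the convex $C$ over the feasible interval $[0,T_1]$ now splits into the two stated cases: if $T_1>L^\ast$ the interior minimizer is admissible, $L=L^\ast$, and the resulting profile and cost coincide verbatim with Theorem~\ref{thm:optiamlT2finite} (Case \romannum{2}); if $T_1\le L^\ast$ the constraint binds, $C$ is decreasing on $[0,T_1]$, so $L=T_1$, giving the uniform density on $[-T_1,T_2]$ with the atom $\Lambda-\mu(T_1+T_2)$ at $T_2$ and the cost $C(T_1)$ recorded in~(\ref{eq:socialoptimalT1finite1}) (Case \romannum{1}). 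Uniqueness in both cases is immediate from strict convexity.

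The main obstacle is the structural reduction of the second paragraph, not the calculation: one must rigorously rule out spreading the unavoidable excess at some super-$\mu$ rate, placing masses at several epochs, or idling the server, in favor of a single atom concentrated at the closing time. Once the no-idling principle and the arrive-at-your-service-epoch principle are secured, the remainder is a one-dimensional convex optimization whose minimizer turns out to be exactly the left endpoint appearing in Theorem~\ref{thm:optiamlT2finite}, which is precisely what produces the clean dichotomy.
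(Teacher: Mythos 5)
Your proposal is correct, and its core is the same reduction the paper itself uses: restrict attention to profiles consisting of arrival rate $\mu$ on an interval $[-L,T_2]$ plus an atom of the leftover volume at $T_2$, and optimize over the single parameter $L$. The difference is organizational. The paper splits immediately into the two cases: for its second case it merely notes that the opening-time constraint is slack, so the minimizer already computed in Theorem~\ref{thm:optiamlT2finite} remains feasible and hence optimal, and for its first case it writes down the constrained profile (density from $-T_1$, atom at $T_2$) and tallies its cost term by term. You instead solve the single convex program $\min_{0\le L\le T_1}C(L)$ and obtain the dichotomy as the interior-versus-corner alternative, so the threshold $\frac{(\alpha+\beta_2)\Lambda-T_2\alpha\mu}{(\alpha+\beta_1+\beta_2)\mu}$ falls out of the first-order condition rather than being imported from the earlier theorem; you also spell out the no-idling/arrive-at-your-service-epoch exchange argument behind the structural reduction, which the paper compresses into ``arriving at any other rate than $\mu$ is suboptimal'' plus the remark that a mass at $T_2$ can be cheaper than arriving earlier than everyone else. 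Your sketch of that argument is sound in the fluid setting and is more than the paper provides. One point in your favor deserves emphasis: your objective correctly carries the term $\beta_2 T_2\,m(L)$, where $m(L)=\Lambda-\mu(L+T_2)$ is the atom size (each drop in the atom pays tardiness $\beta_2 T_2$ on top of its queueing delay); without this term the minimizer would be $\frac{(\alpha+\beta_2)(\Lambda-\mu T_2)}{(\alpha+\beta_1+\beta_2)\mu}$, contradicting the stated threshold. The paper's printed objective~(\ref{eq:optiamlT2LB}) omits this term entirely, and the printed cost~(\ref{eq:socialoptimalT1finite1}) is missing the factor $T_2$ in it; both are typos, since the paper's own prose (``each drop immediately pays $\beta_2 T_2$'') and its stated thresholds require the term. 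Consequently your $C(T_1)$ agrees with the intended, rather than the literal, form of~(\ref{eq:socialoptimalT1finite1}).
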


\begin{proof}
Case \emph{\textbf{\romannum{1 }}}:
The socially optimal strategy is as follows: those who arrive during the interval $[-T_1,T_2]$ do not pay a waiting cost at all, while those who arrive prior to time 0 pay an earliness cost, and those who arrive after time 0 pay a tardiness cost. Within this group, for which the total mass is $(T_2+T_1)\mu$, the overall earliness cost is $\frac{1}{2}T_1^2\beta_1\mu$ and the overall tardiness cost is $\frac{1}{2}T_2^2\beta_2\mu$. There is a mass of $\Lambda-(T_2+T_1)\mu$ which arrives at time $T_2$. Each drop immediately pays $\beta_2T_2$. Moreover, on average each one waits $\frac{\frac{\Lambda}{\mu}-(T_1+T_2)}{2}$ time units, which includes both waiting and tardiness costs. Therefore, the total contribution to the overall cost is $\left(\left(\frac{\frac{\Lambda}{\mu}-(T_1+T_2)}{2}\right)+\beta_2T_2\right)\left(\Lambda-(T_1+T_2)\mu\right)$.\\
Case \emph{\textbf{\romannum{2 }}}:  According to
Theorem~\ref{thm:optiamlT2finite},
$T_1>\frac{\alpha\Lambda+\beta_2\Lambda-T_2\alpha\mu}{(\alpha+\beta_1+\beta_2)\mu}$
implies that the lower bound constraint of the arrival is not
effective under the socially optimal strategy. Therefore, it
coincide  with Theorem~\ref{thm:optiamlT2finite}.
\end{proof}
\begin{remark}
Recall from Theorem~\ref{thm:fluidT1T2} that there are two different
expressions for the equilibrium social welfare, for case 1 and cases
2, 3 and 4. Moreover, there are two different socially optimal
welfare expressions, as shown in
Theorem~\ref{thm:T1finiteT2finiteoptimal}. This means that there are
4 different expressions for the PoA when
$T_1+T_2<\frac{\Lambda}{\mu}$. All four PoA values are obtained by
dividing Eqs.~(\ref{eq:socialcosteqcase1t1finite})
and~(\ref{eq:eqsocialcostT1finitecase234}) by
Eqs.~(\ref{eq:T2finiteoptiamloverallcost})
and~(\ref{eq:socialoptimalT1finite1}).
\end{remark}

\section{Conclusions}\label{con:conclusions}
We have studied the behavior of strategic customers who decide when
to arrive to a queue. Each one wishes to minimize his earliness,
tardiness and waiting costs. The contribution of this study is due to the additional earliness cost to the arrival dilemma.
 As discussed in the introduction, we believe it is vital to consider this cost when deciding when to arrive.
  This is because, in many cases, this reflects much better the situation the customers are facing.
   The results showed that under the assumption that customers are not constrained in their arrival times, the symmetric equilibrium is unique and mixed. The equilibrium behavior for arriving after time 0 is similar to previous studies in which
    earliness cost was not included (see~\cite{Haviv13}). However, the behavior prior to time 0 is quite unusual. If we focus on $f(t)$ for both
    $t<0$ and $t>0$ we observe a big difference and it is that, beside the model parameters, for $t>0$, $f(t)$ is affected only by $p_0(t)$, but
    for $t<0$ it is affected by all $p_k(t)$ for $k\in\{0,1,2,...\}$. Moreover, it is also different from the early birds arrival distribution
    from~\cite{Haviv13},~\cite{Ravner14} and \cite{Kliner&hassin11}, in which it was uniform. This is a direct result of the cost function for $t<0$.
     Specifically, the expression regarding the expected tardiness time. Now, customers are affected not only by the expected waiting time,
      but also by the probability of being late. It is also interesting that,
        $f(t)$ can have both minimum and maximum points in the interval of $[-T_{e_1},0]$, as it is shown in the numerical examples. This indicates how different $f(t)$ is compared to previous studies, in which it was either monotone or unimodal.\\
        \indent We discussed the impact of arrival time constraints.\ By imposing pre-determined opening and closing times, the resulting equilibrium strategy can be either pure, mixed with an atom,
        or mixed without atoms. We note that only when the opening time constraint is effective, there can be an atom. Further more, the two constraints are not independent and this could have a major consequence on the equilibrium behavior. For example, by constraining the closing time, this potentially could change an opening time constraint from being ineffective to be effective. \\
        \indent Guidelines for a numerical procedure to compute the equilibrium strategy with some examples were given, in both the constrained and unconstrained models. Beside the results in the stochastic model, we also derived the equilibrium behavior in the fluid model,
  where, now it is possible to get analytic results as opposed to the stochastic model. This allows us to obtain the socially optimal arrival strategy,
  and consequently the price of anarchy. Although, we are unable to connect the fluid model to the stochastic model,
  we observe that in all the numerical examples the value of $T_{e_1}$ is always larger in the stochastic model.
  That is, customers are inclined to start arriving earlier. We believe that this is due to the uncertainty of the queue length in the stochastic model.
   More specifically, due to the variance of the workload,
   if one decides to arrive prior to time 0 then there is a positive probability he will pay tardiness cost.
   This is not the case in the fluid model, where one knows exactly if he pays tardiness cost or not.
   This uncertainty, will eventually leads customers to start arriving earlier.
   This is helpful, as it may used as a lower bound of the arrival point. Moreover, this is also true for the atom size $p$,
    in a constrained model. That is, more customers will arrive at the atom in the stochastic model from the same reasons.\\
     \indent We next discuss two limitations of our model. The first limitation is that we assumed linear costs.
      Of course, non-linear costs are much more realistic. However, considering non-linear cost functions complicates things significantly and the type of the
       equations may alter the result completely. For future study, we suggest considering perhaps some form of convex cost function,
       where it describes better real life situation. As mentioned, non-liner costs were considered in~\cite{Breinbjerg}.\ The second limitation is that the customers are homogenous with respect to their cost function.
        However, heterogeneous customers is clearly more realistic. We note that considering heterogeneous customers can be done only
         by using the fluid model and otherwise is too complicated. It has been done in~\cite{jainhonnappa12} and~\cite{junejajainshimkin11},
         where both waiting and tardiness costs were considered under a fluidic model. It would be interesting, for future study to consider heterogeneous users as well.
          Another issue we have yet to attend to is the existence of an asymmetric equilibria. In~\cite{junejashimkin11}, where both waiting and tardiness were considered, they first proved that there is a unique Nash equilibrium (not necessarily symmetric at this point), and then they proved that it is symmetric. By this, they ruled out asymmetric Nash equilibria. We believe that this is the case in our model as well, due to the similarity of our models and~\cite{junejashimkin11}. Whereas, our model is an extension of the one in~\cite{junejashimkin11}.

\blank{4cm}

\renewcommand{\abstractname}{Acknowledgements}
\begin{abstract}
Thanks are due to Tomer Sharon for generating numerical examples and
Joseph Kreimer for making helpful comments. Also, we wish to thank
the editor and reviewers for their thorough work and meaningful
suggestions. This research is supported by the Israel Science
Foundation, grant No. 1319/11.
\end{abstract}

\bibliographystyle{plain}

\end{document}